\newcommand\footnoteref[1]{\protected@xdef\@thefnmark{\ref{#1}}\@footnotemark}
\definecolor{grey}{rgb}{0.95,0.95,0.95}
\definecolor{green}{rgb}{0.2,0.6,0.4}
\newcommand{\N}{\mathbb{N}}
\newcommand{\ext}{\succeq}
\newcommand{\Ccal}{\mathcal{C}}
\newcommand{\cs}{2^\omega}
\newcommand{\uh}{{\upharpoonright}}
\renewcommand{\setminus}{\smallsetminus}
\newcommand{\DNC}{\mathrm{DNC}}
\newcommand{\dnc}{\mathbf{dnc}}
\newcommand{\mlr}{\mathbf{mlr}}
\newcommand{\complex}{\mathbf{complex}}
\newcommand{\req}[1]{\mathcal{R}_{#1}}
\renewcommand{\phi}{\varphi}
\newcommand{\p}[1]{\left( #1 \right)}
\newcommand{\set}[1]{\left\{ #1 \right\}}
\newcommand{\card}[1]{\left| #1 \right|}
\newcommand{\paths}[1]{\llbracket #1 \rrbracket}
\newtheoremstyle{custom}
  {10pt}
  {10pt}
  {\normalfont}
  {}
  {\bfseries}
  {}
  { }
  {}
\theoremstyle{custom}
\newtheorem{theorem}{Theorem}
\newtheorem{proposition}[theorem]{Proposition}
\newtheorem{lemma}{Lemma}
\newtheorem{definition}{Definition}
\theoremstyle{remark}
\newtheorem*{remark}{Remark}
\newtheorem{claim}{Claim}
\definecolor{lightblue}{rgb}{.60,.60,1}
\definecolor{lightred}{rgb}{1,.60,.60}
\title{Diagonally non-computable functions and fireworks}
\author{
  Laurent Bienvenu \and Ludovic Patey
}
\begin{document}

\maketitle

%


\begin{abstract}
A set $\Ccal$ of reals is said to be negligible if there is no probabilistic algorithm
which generates a member of~$\Ccal$ with positive probability. Various classes have been proven 
to be negligible, for example the Turing upper-cone of a non-computable real, the class of coherent completions of Peano Arithmetic or the class of reals of minimal Turing degree. 
One class of particular interest in the study of negligibility is the class of diagonally non-computable (DNC) functions, proven by Ku\v cera to be non-negligible in a strong sense: every Martin-L\"of random real computes a DNC function. Ambos-Spies et al.\ showed that the converse does not hold: there are DNC functions which compute no Martin-L\"of random real. In this paper, we show that the set of such DNC functions is in fact non-negligible. More precisely, we prove that for every sufficiently fast-growing computable~$h$, every 2-random real computes an $h$-bounded DNC function which computes no Martin-L\"of random real. Further, we show that the same holds for the set of reals which compute a DNC function but no bounded DNC function. The proofs of these results use a combination of a technique due to Kautz (which, following a metaphor of Shen, we like to call a `fireworks argument') and bushy tree forcing, which is the canonical forcing notion used in the study of DNC functions. 
\end{abstract}

%


\section{Background}

\subsection{Negligibility, Levin-V'yugin algebra and DNC functions}

Let~$\Ccal \subseteq \cs$ be a class of infinite binary sequences (a.k.a.\ \emph{reals}) and consider the set
\[
\{X \in \cs: X ~\text{Turing-computes some element of} ~\Ccal\}
\]  
By Kolmogorov's 0-1 law theorem, its (Lebesgue) measure is either~$0$ or~$1$. If it is equal to~$0$, the class $\Ccal$ is said to be~\emph{negligible}, a terminology due to Levin and V'yugin~\cite{LevinV1977}. Equivalently, this means that there is no (infinite) probabilistic algorithm which generates a member of~$\Ccal$ with positive probability. 

Various classes have been proven to be negligible, for example the Turing upper-cone of a non-computable~$A$~\cite{DeLeeuwMSS1956} (or a countable class of such $A$'s), the class of coherent completions of Peano Arithmetic~\cite{JockuschS1972}, the class of reals of minimal Turing degree~\cite{Paris1977}, the class of shift-complex sequences~\cite{Rumyantsev2011,Khan2013}, etc. Likewise, many classes have been showed to be non-negligible (or `typical'). Obviously, classes of positive measure such as the class of Martin-L\"of random reals are all non-negligible. Much more interesting examples were given by Kautz~\cite{Kautz1991}\footnote{Very similar ideas were used by V'yugin in~\cite{Vyugin1976,Vyugin1982}}. He showed that the following are non-negligible: the class of hyperimmune sets, the class of 1-generic reals, and the class of reals of CEA degrees. All three results are variations of the same technique. However, the way Kautz presents his technique is quite abstract and in some sense hides its true spirit, namely that what is being used is a probabilistic algorithm. In the paper~\cite{RumyantsevS2013}, Rumyantsev and Shen give a more explicit and intuitive explanation of the underlying algorithm, using the metaphor of a fireworks shop in which a customer is trying to either buy a box of good fireworks, or expose the vendor by opening a flawed one, and uses a probabilistic algorithm to maximize his chances of success. Following the tradition of colourful terminology in computability theory (Lerman's pinball machine, Nies' decanter argument...), we propose to refer to this method as a \emph{fireworks argument}, the precise template of which will be recalled in the next section. 

The dichotomy between negligibility and non-negligibility has received quite a lot of attention in recent years. We refer the reader to~\cite{BarmpaliasDL2013,BienvenuP2016,Simpson2011} for a panorama of the existing results in this direction.  \\

One class of particular interest in the study of negligibility is the class of diagonally non-computable functions, or `DNC functions' for short. It consists of the total functions~$f: \N \rightarrow \N$ such that $f(n) \not= \phi_n(n)$ for all~$n$, where $(\phi_n)_n$ is a standard enumeration of partial computable functions from $\N$ to~$\N$. By definition, there is no computable DNC function. On the other hand, as showed by Ku\v cera~\cite{Kucera1985}, the class of DNC functions is non-negligible. If we take the point of view of probabilistic algorithms, this is clear: for all~$n$ there is only one value for $f(n)$ to avoid (namely, $\phi_n(n)$ if it is defined), so by picking $f(n)$ at random between~$0$ and some large integer (e.g.\ $2^{n}$), we ensure a positive probability of success. The situation becomes more interesting when one restricts the class $\DNC$ to the subclass
\[
\DNC_h = \{f: \N \rightarrow \N \mid f ~\text{ is a DNC function and }~ (\forall n) f(n) < h(n)\}
\]
where~$h$ is a given function, typically a computable one. The faster~$h$ grows, the easier it is to obtain an element of~$\DNC_h$. And indeed, depending on the growth rate of~$h$ the class~$\DNC_h$ can be negligible or non-negligible (more specifically, for~$h$ computable, $\DNC_h$ is negligible if and only if $\sum_n 1/h(n) = \infty$, this is an unpublished result due to J. Miller, see~\cite{BienvenuP2016} for a proof). 

This notion relativizes to an arbitrary oracle~$X$: a DNC$^X$ function is a function $f$ such that $f(n) \not= \phi^X_n(n)$ for all~$n$. Likewise, we set
\[
\DNC^X_h = \{f: \N \rightarrow \N \mid f ~\text{ is a DNC$^X$ function and }~ (\forall n) f(n) < h(n)\}
\]
Of course, the stronger the oracle~$X$, the harder it is to compute a DNC$^X$ function. \\

In this paper, we study the role of DNC functions in the setting of the Levin-V'yugin algebra, which is the algebra of Turing invariant Borel sets `modulo negligibility'. That is, for two Turing-invariant sets $\mathcal{A}$ and $\mathcal{B}$ we write $\mathcal{A} \sqsubseteq \mathcal{B}$ if $\mathcal{A} \setminus \mathcal{B}$ is negligible. We say that $\mathcal{A}$ and $\mathcal{B}$ are equivalent if $\mathcal{A} \sqsubseteq \mathcal{B}$ and $\mathcal{B} \sqsubseteq \mathcal{A}$. We call an equivalence class for this equivalence relation a \emph{Levin-V'yugin degree}.

Despite having been introduced some time ago and being a very natural notion, little work has been done on the Levin-V'yugin degrees, except for the seminal papers~\cite{LevinV1977,Vyugin1976,Vyugin1982} and some ongoing work by H\"olzl and Porter. It is during discussions with the authors of the latter that a question arose. Ku\v cera's result discussed above shows that reals of Martin-L\"of random degree are also of DNC degree, so $\mlr \sqsubseteq \dnc$, where $\mlr$ is the set of reals Turing-equivalent to a Martin-L\"of random real, and $\dnc$ those equivalent to a DNC function. On the other hand, it is well-known that there are DNC functions which do not Turing-compute any Martin-L\"of random real (see subsection~\ref{subsec:bushy-trees} below). But does this result translate in the setting of the Levin-V'yugin algebra? More precisely, is it then the case that $\mlr \sqsubset \dnc$ (i.e., that $\dnc \setminus \mlr$ is non-negligible)? In this paper, we answer this question in the affirmative. Namely, we prove:


\begin{theorem}[Main theorem] \label{thm:main-unrelativized}
For every sufficiently fast-growing computable~$h$, every 2-random (i.e., Martin-L\"of random relative to $\emptyset'$) real~$Z$  computes some $f \in \DNC_h$ which does not compute any Martin-L\"of random real.
\end{theorem}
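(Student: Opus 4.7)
The plan is to construct $f \le_T Z$ by iterated bushy-tree forcing in which each forcing step is executed probabilistically from bits of~$Z$ via a fireworks argument. Enumerate the Turing functionals $\Phi_e$. Starting from a $k_0$-bushy tree $T_0$ of $h$-bounded DNC extensions, I would inductively produce a nested sequence $T_0 \supseteq T_1 \supseteq T_2 \supseteq \cdots$ of $k_e$-bushy subtrees such that, for every infinite path~$f$ through $T_{e+1}$, either $\Phi_e^f$ is partial or $\Phi_e^f$ lies entirely inside a pre-specified level of a universal Martin-L\"of test, so that $\Phi_e^f$ is not ML-random. The desired function~$f$ is then recovered as the leftmost path in the intersection of all $T_e$.

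The first ingredient is the standard bushy-tree combinatorial lemma for avoiding ML-random reals (of the sort used by Ambos-Spies, Kjos-Hanssen, Lempp and Slaman, and Greenberg--Miller): for sufficiently fast-growing~$h$ and sufficiently large~$k$, given a $k$-bushy $h$-bounded DNC tree~$T$ above some~$\sigma$ and a Turing functional~$\Phi$, there exist a $k/2$-bushy subtree $T' \subseteq T$ and an index~$n$ such that every path~$f$ through~$T'$ satisfies either ``$\Phi^f$ is partial'' or ``$\Phi^f \in U_n$'', where $\{U_n\}$ is a fixed universal ML-test. The existence of such a pair $(T',n)$ is essentially a $\Pi^0_2$ fact, so it can be located using~$\emptyset'$; without the $\emptyset'$ oracle, this is precisely the point where fireworks must enter.

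The fireworks argument lets us simulate this $\emptyset'$-decision probabilistically from~$Z$. At stage~$e$, we reserve a disjoint block of bits of~$Z$, interpreted as a random integer~$N_e$ drawn from a geometric-type distribution. The algorithm enumerates candidate pairs $(T', n)$ in a fixed computable order and, for each candidate, watches for a c.e.\ certificate that it is bad (an extension on $T'$ whose $\Phi$-image is observed to escape $U_n$). We commit to the $N_e$-th candidate that has not yet produced such a certificate. The Kautz--Rumyantsev--Shen analysis then shows that the probability (over the random block) of committing to a bad candidate can be made at most~$2^{-e-2}$ by enlarging the block. Crucially, the resulting procedure produces $f$ computably from~$Z$ alone, via some fixed functional~$\Gamma$: $\emptyset'$ plays no role in the construction, only in its analysis.

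The main obstacle, and where 2-randomness of~$Z$ is used, is the failure analysis. For each~$e$, the event ``stage~$e$ commits to a bad candidate'' is $\Sigma^0_2$ (``eventually some path on the chosen subtree is witnessed to leave $U_n$''), and has measure at most $2^{-e-2}$. Uniformly in~$e$ one obtains a $\emptyset'$-c.e.\ open cover of measure at most $2^{-e-1}$, and the union over~$e$ is a $\emptyset'$-Martin-L\"of test covering every~$Z$ for which some stage fails. Since~$Z$ is 2-random, $Z$ avoids this test, so all stages succeed and $f = \Gamma^Z$ is an $h$-bounded DNC function computing no ML-random real. The subtle point is arranging genuine probabilistic independence between the fireworks at different stages: although the subtree~$T_{e}$ depends on the earlier random blocks, the block used at stage~$e$ must be independent of them, which is ensured by pre-allocating disjoint segments of~$Z$ and verifying measurability of the bushy-tree bookkeeping against this filtration; threading this through the $\Sigma^0_2$ bound is the main technical chore.
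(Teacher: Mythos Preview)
Your overall architecture---bushy-tree combinatorics plus fireworks, with 2-randomness entering via a $\emptyset'$-Martin-L\"of test on the failure set---is indeed the paper's. But your implementation has a genuine gap precisely where the paper's key idea lies. In your dichotomy, the ``$\Phi_e^f$ partial'' outcome produces a subtree $T'$ obtained by deleting a small c.e.\ set; such a $T'$ is only co-c.e., and the same already holds for your starting tree $T_0$ of $h$-bounded DNC strings, which is $\Pi^0_1$. A nested sequence of co-c.e.\ trees does not have a $Z$-computable leftmost path, so your construction does not deliver $f \leq_T Z$. Your fireworks step (``enumerate candidate pairs $(T',n)$ and commit to the $N_e$-th one not yet refuted'') is also not well-posed: there is no computable enumeration of infinite co-c.e.\ bushy subtrees with the right properties, and you never specify at what finite stage the commitment occurs. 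Finally, your failure event ``some path of the committed $T'$ has $\Phi^f$ total and $\Phi^f \notin U_n$'' is not $\Sigma^0_2$ as you assert: it asks whether a $\Pi^0_2$ class in the compact space of paths is nonempty, which is $\Sigma^1_1$ in general, so you do not get a $\emptyset'$-test this way.

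The paper avoids all of this by abandoning the nested-tree picture entirely. The construction lives in the single \emph{computable} tree $T = \{\sigma : (\forall i)\, \sigma(i) < h(i)\}$, and $A$ is built coordinate-by-coordinate via a random walk. Small sets are never pruned; instead, if a c.e.\ set $S$ is $g_k$-small above the current $\sigma$, the random walk avoids $S$ with high probability (Lemma~\ref{lem:small-vs-measure}). The DNC requirement is handled the same way: $B_{\DNC}$ is $2$-small, so the random path is $\DNC$ with high probability, and no $\Pi^0_1$ DNC tree ever enters the construction. The fireworks dichotomy is the clean $\Pi_1/\Sigma_1$ question ``is the c.e.\ set $S = \{\tau : |\Gamma^\tau| \geq h(k-1)\}$ $g_k$-small above the current $\sigma$?''. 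In the $\Sigma_1$ (big) case one effectively finds a \emph{finite} bushy tree whose leaves force $\Gamma^A$ to extend a fixed string $\rho^*$ of low Kolmogorov complexity, and temporarily restricts the walk to it. The resulting failure events---getting stuck searching for a nonexistent finite big tree, or hitting a set that was correctly assumed small---are then genuinely $\Sigma^0_1(\emptyset')$, which is what yields the $\emptyset'$-Martin-L\"of test.
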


Not only is this result interesting in its own right, but its proof is particularly instructive. It combines fireworks arguments with bushy tree forcing, a forcing notion used in many recent papers to study the properties of DNC functions \cite{AmbosSpiesKLS2004,cai2011elements,dorais2014comparing,greenberg2011diagonally,mushfeq}. To our knowledge, our proof is the most elaborate use of a fireworks argument to date. It illustrates quite convincingly the power of the technique, and is likely to yield further applications in the future.\\

Also interesting is the fact that if we want to study functions which are $\DNC$ relative to some oracle~$X$, we can state a stronger theorem than what we would get from a straightforward relativization of Theorem~\ref{thm:main-unrelativized}.

\begin{theorem}[Main theorem, relativized]\label{thm:main-theorem}
For any real $X$ and a sufficiently fast-growing computable $h$,
every real $Z$ which is both $X$-random and 2-random computes a function $f \in \DNC_h^X$
which itself computes no Martin-L\"of random real.
\end{theorem}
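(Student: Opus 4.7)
The plan is to describe an oracle algorithm, running with oracles $X$ and $\emptyset'$ and using the bits of $Z$ as random coin flips, that with positive probability outputs a function $f \in \DNC^X_h$ computing no Martin-L\"of random real. The theorem will then follow by showing that the corresponding success event is large enough in Cantor space that $X$-randomness combined with $2$-randomness of $Z$ already guarantees $Z$ hits it.

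The construction builds $f$ in stages $\sigma_0 \preceq \sigma_1 \preceq \dots$ along the finitely branching tree $T = \prod_n \{0,\dots,h(n)-1\}$, meeting two kinds of requirement. The first, that $f$ be a DNC$^X$ function, is handled at stage $n$ by picking $f(n)$ uniformly at random from $\{0,\dots,h(n)-1\} \setminus \{\phi^X_n(n)\}$ whenever $\phi^X_n(n)\halts$; since $h$ grows fast, this step consumes only a few bits of $Z$ and uses only $X$ as an oracle. The second family, avoiding computation of Martin-L\"of random reals, is handled via bushy tree forcing: for every functional $\Gamma$ and every Martin-L\"of test $(V_k)$, at the appropriate stage the algorithm inspects the subtree of $T$ above the current $\sigma$ and asks, in bushy-tree fashion, whether the set of extensions on which $\Gamma^f$ is forced into some $V_k$ is large. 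In the `large' case any sufficiently random extension will eventually meet the test; in the `small' case a complementary bushiness argument furnishes an extension along which $\Gamma^f$ can be forced to be partial.

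The obstacle is that this big/small dichotomy is not $\emptyset'$-decidable. This is where the fireworks (or Kautz) argument enters: at each relevant stage the algorithm tosses a biased coin to decide whether to commit to the divergence branch now, or to postpone and keep trying on the assumption that the current subtree is `large'. A standard fireworks calculation shows that, for appropriate biases, with positive probability the algorithm commits correctly on every requirement. Because the fireworks coin flips can be arranged to use blocks of bits of $Z$ disjoint from those used to avoid $\phi^X_n(n)$, the first family of requirements only needs $X$-randomness of $Z$ while the second only needs $2$-randomness, exactly matching the hypothesis. The heart of the argument---and the main technical difficulty---is choosing the bushiness parameters at each stage in a way compatible with the fireworks' bias schedule, so that the overall success probability remains bounded away from zero while both the DNC$^X$ requirement and the infinitely many MLR-avoidance requirements are simultaneously met along every $Z$ of sufficient randomness.
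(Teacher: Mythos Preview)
Your high-level plan---combine fireworks with bushy tree forcing---matches the paper, but there is a structural error that would prevent the argument from yielding the stated conclusion. You propose an algorithm running with oracles $X$ and $\emptyset'$. If the construction reads from $X$ (to exclude $\phi^X_n(n)$ from the random choice) or from $\emptyset'$, then the output $f$ is computable only from $Z\oplus X\oplus\emptyset'$, not from $Z$ alone, whereas the theorem asserts $f \leq_T Z$. The paper's algorithm is \emph{purely probabilistic}: it uses only the random bits of $Z$ and no other oracle. The oracles $X$ and $\emptyset'$ enter solely in the \emph{post hoc} analysis of the failure set, where one shows that the bad $Z$'s are covered by an $X$-Martin-L\"of test (for hitting $B_{\DNC}$) together with a $\emptyset'$-Martin-L\"of test (for getting stuck in the fireworks, or hitting a set that was correctly assumed small). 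In particular the DNC requirement is met not by deleting $\phi^X_n(n)$ from the choice set, but by choosing $f(n)$ uniformly in $\{0,\dots,h(n)-1\}$ and observing that the probability of ever hitting $B_{\DNC}$ is at most $\sum_n 1/h(n)$. The separation of the two randomness hypotheses is not a matter of disjoint blocks of bits; it is a matter of the complexity of the respective failure events.

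Your description of the two cases is also inverted relative to what the argument requires. In the \emph{small} case the correct action is passive: continue the random walk in the ambient tree and rely on Lemma~\ref{lem:small-vs-measure} to avoid $S$ with high probability (this is the $\Pi_1$ guess in fireworks terminology). In the \emph{big} case, a random walk in the full $h$-bushy tree need \emph{not} hit $S$ with positive probability---bigness guarantees a bushy subtree with leaves in $S$, not positive measure---so one must actively locate such a subtree and temporarily restrict the walk to it. The key step you omit is how this forces non-randomness: by smallness additivity (Lemma~\ref{lem:smallness-add}) there is some $\rho^*$ of length $h(k-1)$ with $S_{\rho^*}=\{\tau\in S: \Gamma^\tau \succeq \rho^*\}$ still $(g_k/2^{h(k-1)})$-big, and since $\rho^*$ is found effectively from $(\sigma,\Gamma)$ its Kolmogorov complexity is $O(\sum_{i<k}\log h(i)) \ll h(k-1)=|\rho^*|$, forcing $\Gamma^f$ to have an initial segment of low complexity. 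Without this complexity-drop mechanism the ``large'' branch gives no control over $\Gamma^f$.
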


A straightforward relativization of Theorem~\ref{thm:main-unrelativized} would require~$Z$ to be $X'$-random, and would give a function~$f$ which does not compute any $X$-Martin-L\"of random real, but it could still compute a Martin-L\"of random real. Note that taking $X=\emptyset'$ gives us a stronger result than Theorem~\ref{thm:main-unrelativized}: Every 2-random  real~$Z$  computes some $f \in \DNC^{\emptyset'}_h$ which is does not compute any Martin-L\"of random real.
%

We finally remark \emph{en passant} that `2-random' in the hypothesis of Theorem~\ref{thm:main-theorem} cannot be substituted for `Martin-L\"of random', as shown by the following easy proposition. 
\begin{proposition}\label{prop:easy-ml-is-too-weak}
There is a Martin-L\"of random real which computes no member of $\dnc \setminus \mlr$.
\end{proposition}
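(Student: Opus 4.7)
The plan is to exhibit an MLR $R$ of hyperimmune-free (HIF) degree that additionally enjoys the ``MLR-minimality'' property that every MLR Turing-below $R$ is Turing-equivalent to $R$. Existence of a HIF MLR is standard: apply the Jockusch--Soare hyperimmune-free basis theorem (combined with a Kautz-style basis for randomness) to a $\Pi^0_1$ class of positive measure consisting mostly of MLRs, such as the complement of the first level of a universal Martin-L\"of test. The additional MLR-minimality would be arranged by interleaving a further forcing step into the construction, diagonalizing at each stage $e$ against the $e$-th Turing functional $\Phi_e$ producing an MLR strictly below $R$.

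Given such an $R$, suppose $f \leq_T R$ is of DNC degree, i.e.\ $f$ computes some DNC function. Since $R$ is of HIF degree and HIF is downward closed under Turing reducibility, $f$ is also HIF and hence computably dominated. It follows that some $g \leq_T f$ is DNC with $g \in \DNC_h$ for a computable $h$. By the Kjos-Hanssen--Merkle--Stephan theorem, $g$ computes a Martin-L\"of random real $\tilde g$. Since $\tilde g \leq_T R$ is an MLR, the MLR-minimality of $R$ forces $\tilde g \equiv_T R$, so $R \leq_T \tilde g \leq_T f \leq_T R$, giving $f \equiv_T R$. In particular $f$ is Turing-equivalent to an MLR, so $f \in \mlr$ and $f \notin \dnc \setminus \mlr$. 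As this holds for every $f \leq_T R$ of DNC degree, $R$ computes no member of $\dnc \setminus \mlr$.

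The main obstacle is the layered forcing argument: one must arrange MLR-minimality of $R$ simultaneously with HIF-ness while staying inside $\Pi^0_1$ classes of positive measure. Concretely, at stage $e$ one needs to shrink the current $\Pi^0_1$ class to a positive-measure subclass on which either $\Phi_e^R$ fails to be MLR or $\Phi_e^R$ is Turing-equivalent to $R$, compatibly with the shrinkings dictated by the HIF basis construction. Managing this interleaving so that both properties survive to the limit is the delicate part of the argument; once it is accomplished, the verification in the preceding paragraph is essentially mechanical.
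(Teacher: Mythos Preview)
Your proposal has a genuine error, and it also overshoots what is needed. The step ``By the Kjos-Hanssen--Merkle--Stephan theorem, $g$ computes a Martin-L\"of random real $\tilde g$'' is incorrect: the KHMS characterization relates computably bounded DNC functions to \emph{complex} reals, not to Martin-L\"of randoms. In fact, the main theorem of this very paper constructs, for any sufficiently fast-growing computable $h$, a $\DNC_h$ function that computes no Martin-L\"of random real, so no such implication can hold in general. Without this step your chain breaks: you only get $\tilde g \leq_T f$ with $\tilde g$ complex, which says nothing about $f$ being of MLR degree, and the MLR-minimality hypothesis never gets to fire.

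Moreover, the ``MLR-minimality'' property you propose to force is both unproved (you yourself flag the interleaved construction as ``the delicate part'') and unnecessary. The paper's proof is essentially a one-liner from two classical facts. Take any hyperimmune-free Martin-L\"of random $Z$; since $Z$ is HIF, every $f \leq_T Z$ satisfies $f \leq_{tt} Z$. By Demuth's theorem, every non-computable real tt-below a Martin-L\"of random is itself of Martin-L\"of random degree. Since DNC functions are non-computable, every DNC $f \leq_T Z$ is already in $\mlr$. No forcing interleaving, no minimality, no detour through a bounded $g$ and an auxiliary $\tilde g$ is needed. If you replace your broken KHMS step by the correct tool (Demuth's theorem applied directly to $f$), your argument collapses to exactly this, and the MLR-minimality hypothesis becomes idle.
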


\begin{proof}
Take any hyperimmune-free Martin-L\"of random~$Z$. Because of hyperimmune-freeness, everything Turing-computed by~$Z$
is in fact tt-computed by~$Z$. Moreover, every non-computable real which is tt-computed by a Martin-L\"of random real is also of Martin-L\"of random degree~\cite{Demuth1988}. In particular,
every DNC function it computes is of Martin-L\"of random degree.
\end{proof}

\begin{remark}
Readers who are experts in algorithmic randomness may not fully be satisfied with this last proposition, and rightfully so. Indeed, there is a wealth of algorithmic randomness notions between Martin-L\"of randomness and 2-randomness, and it would be interesting to know precisely which levels of randomness it is sufficient for a real to have in order to compute a member of $\dnc \setminus \mlr$. A more precise answer is the following: weak-2-randomness is not sufficiently strong, but Demuth randomness is. For the first part of this assertion, observe that in the proof of Proposition~\ref{prop:easy-ml-is-too-weak}, one can take~$Z$ to be weak-2-random (indeed there are hyperimmune-free weak-2-randoms). The second part is more involved and requires a fine analysis of fireworks arguments which will be done in a forthcoming paper by Christopher Porter and the first author. However, the crude `2-randomness' bound we use in this paper is sufficient for our main goal, which is to prove the non-negligibility of $\dnc \setminus \mlr$. 
\end{remark}


\subsection{Notation and terminology}

Unless otherwise specified, a \emph{string} is a finite sequence of integers. We denote the set of strings by $\omega^{<\omega}$, by $\lambda$ the empty string and by 
$\card{\sigma}$ the length of a string~$\sigma$. Unless specified otherwise, a~\emph{sequence} is an infinite list of integers. The set of sequences is denoted by $\omega^{\omega}$. We will sometimes need to consider \emph{binary} sequences (which we also call \emph{reals}), the set of which we denote by $2^\omega$. The $n$-th element of a string or sequence~$Z$ is denoted by $Z(n-1)$ and $Z \uh n$ denotes the finite sequence consisting of the first~$n$ values of~$Z$. A string $\tau$ is a \emph{prefix} of a string $\sigma$ (we also say that $\sigma$ \emph{extends} $\tau$), noted $\tau \preceq \sigma$, if $|\sigma| \geq |\tau|$ and $\sigma \uh |\tau| = \tau$. 

A sequence $Z \in \omega^{\omega}$ is said to be a DNC function (resp.\ $X$-DNC function) if for all~$n$, $Z(n) \not= \phi_n(n)$ (resp.\ $Z(n) \not= \phi_n^X(n)$), where $(\phi_n)$ is a standard enumeration of partial computable functions from $\N$ to $\N$ with oracle.

A \emph{tree} $T \subseteq \omega^{<\omega}$ is a set of strings closed downwards under the prefix relation,
i.e., if $\sigma \in T$ and $\tau \preceq \sigma$ then $\tau \in T$. Members of a tree are often referred to as nodes. A node $\sigma$ is a \emph{child} (or \emph{immediate extension}) of a node $\tau$ in a tree~$T$ if $\tau \preceq \sigma$, $\tau$ and $\sigma$ are nodes of~$T$, and $|\sigma|=|\tau|+1$. A \emph{leaf} of a tree~$T$ is a node with no immediate extension in~$T$. A \emph{path in a tree $T$} is a sequence $f$ such that
every initial segment of $f$ is in $T$. The set of paths of a tree $T$
forms a closed set denoted by $\paths{T}$.

\subsection{Bushy trees}\label{subsec:bushy-trees}

In this section we present the notion of bushy tree and its main properties. Roughly speaking, bushiness is a purely combinatorial property, which states that a tree is `sufficiently fast branching', in a way that guarantees the existence of a DNC path through the tree. The idea of bushy tree was invented by Kumabe, who used it to construct a DNC function of minimal Turing degree (see~\cite{KumabeL2009} for an exposition of this result), which in particular shows that there is a DNC function which computes no Martin-L\"of random real (as no Martin-L\"of random real can have minimal degree). Since then, bushy trees have been successfully applied to the study of DNC functions. We refer the reader to the excellent survey of Khan and Miller~\cite{mushfeq}.

\begin{definition}[Bushy tree]
Fix a function $h$ and a string $\sigma \in \omega^{<\omega}$.
A tree $T$ is  $h$-bushy (resp.\ \emph{exactly $h$-bushy}) above $\sigma$ if every $\tau \in T$
is comparable with $\sigma$ and whenever $\tau \succeq \sigma$ is not a leaf of~$T$,
it has at least (resp.\ exactly) $h(\card{\tau})$ immediate children. 
We call $\sigma$ the \emph{stem} of $T$.
\end{definition}

\begin{definition}[Big set, small set]
Fix a function $h$ and some string $\sigma \in \omega^{<\omega}$.
A set $B \subseteq \omega^{<\omega}$ is \emph{$h$-big above $\sigma$} 
if there exists a finite tree $T$ which is $h$-bushy above $\sigma$ and
such that all leaves of $T$ are in $B$. 
If no such tree exists, $B$ is said to be \emph{$h$-small above $\sigma$}.
\end{definition}

Bushy tree forcing consists generally of building decreasing sequences 
of infinite bushy trees $T_0 \supseteq T_1 \supseteq T_2 \dots$
where $T_i$ is $h$-bushy over $\sigma_i$ for some string $\sigma_i$. 
Each stem $\sigma_i$ is an initial segment of the constructed sequence. 
During the construction we maintain a set $B$ of ``bad'' extensions, 
i.e., of strings to avoid. This set must remain $g$-small above $\sigma_i$ 
at any stage for some function $g$. Bushy tree forcing is especially convenient
for building DNC functions. Let $B_{\DNC}$ be the set of strings which are not initial segments 
of any DNC function:
\[
B_{\DNC} = \set{\sigma \in \omega^{<\omega} : 
  (\exists e < \card{\sigma})\, \phi_e(e) \downarrow = \sigma(e) }
\]

One can easily see that $B_{\DNC}$ is  $2$-small above~$\lambda$. \\

The following three lemmas are at the core of every bushy tree argument. We state them without proof and refer the reader to~\cite{mushfeq} for details. 

\begin{lemma}[Concatenation]\label{lem:concatenation-prop}
Fix a function $h$.
Suppose that $A \subseteq \omega^{<\omega}$ is $h$-big above a string~$\sigma$.
Let $(S_\tau)_{\tau \in A}$ be a family of subsets of  $\omega^{<\omega}$. If $S_\tau \subseteq \omega^{<\omega}$ is $h$-big above $\tau$
for every $\tau \in A$, then $\bigcup_{\tau \in A} S_\tau$ is $h$-big above $\sigma$.
\end{lemma}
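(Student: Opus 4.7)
The plan is to carry out a straightforward gluing argument, combining the witness tree for the $h$-bigness of $A$ above $\sigma$ with the witness trees for each $S_\tau$ above $\tau$.

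Since $A$ is $h$-big above $\sigma$, by definition there exists a finite tree $T$ which is $h$-bushy above $\sigma$ and whose leaves all lie in $A$. Similarly, for each leaf $\tau$ of $T$ (which belongs to $A$), the hypothesis that $S_\tau$ is $h$-big above $\tau$ yields a finite tree $T_\tau$ that is $h$-bushy above $\tau$ and whose leaves all lie in $S_\tau$. The idea is then to form the union $T' = T \cup \bigcup_{\tau} T_\tau$, i.e., graft each $T_\tau$ onto the corresponding leaf $\tau$ of $T$.

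Next I would verify that $T'$ is a finite tree $h$-bushy above $\sigma$. Finiteness is clear (a finite union of finite trees). Every node of $T'$ is comparable with $\sigma$: nodes of $T$ are comparable with $\sigma$ because $T$ is $h$-bushy above $\sigma$, and nodes of $T_\tau$ extend $\tau$ which extends $\sigma$. For the branching condition, any non-leaf node $\rho$ of $T'$ with $\rho \succeq \sigma$ falls into one of two cases. Either $\rho$ was a non-leaf of $T$, in which case its $T'$-children coincide with its $T$-children and are at least $h(|\rho|)$ in number; or $\rho$ lies in some $T_\tau$ with $\rho \succeq \tau$ (this includes the case $\rho = \tau$, where the children of $\rho$ in $T'$ are exactly the children of $\tau$ in $T_\tau$ because $\tau$ was a leaf of $T$), and then its $T'$-children are its $T_\tau$-children, which again number at least $h(|\rho|)$. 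Finally, the leaves of $T'$ are precisely the leaves of the various $T_\tau$, each of which belongs to $S_\tau \subseteq \bigcup_{\tau \in A} S_\tau$. This witnesses that $\bigcup_{\tau \in A} S_\tau$ is $h$-big above $\sigma$.

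There is really no serious obstacle here; the only minor subtlety is checking that the branching condition is preserved precisely at the gluing points $\tau$, but this is immediate since $\tau$ is a leaf in $T$ and inherits all its $h(|\tau|)$ children from $T_\tau$. The argument works verbatim if $h$-bushy is replaced by exactly $h$-bushy in the hypotheses, provided one prunes redundant branches, but that refinement is not needed for this statement.
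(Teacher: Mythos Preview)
Your argument is correct and is exactly the standard gluing proof (the paper itself does not prove this lemma but defers to Khan--Miller~\cite{mushfeq}, where precisely this argument appears). One small imprecision: you write that ``nodes of $T_\tau$ extend $\tau$'', but in fact $T_\tau$, being a tree, also contains all proper prefixes of $\tau$; however these are already in $T$ and are comparable with $\sigma$, so the conclusion is unaffected.
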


The concatenation property is often used in the following contrapositive form:
If we are given a finite tree $T$ $h$-bushy above some string $\sigma$
and a ``bad'' set $B$ of extensions to avoid which is $h$-small above $\sigma$, then there exists
a leaf $\tau$ of $T$ such that $B$ is still $h$-small above $\tau$.
In particular, if a set $A$ is $h$-big above $\sigma$, there exists an extension $\tau$
of $\sigma$ which is in $A$ and such that $B$ is still $h$-small above $\tau$.

\begin{lemma}[Smallness additivity] \label{lem:smallness-add}
Suppose that $B_1, B_2, \ldots, B_n$ are subsets of $\omega^{<\omega}$, $g_1$, $g_2$, ..., $g_n$ are functions, and $\sigma \in \omega^{<\omega}$.
If $B_i$ is $g_i$-small above~$\sigma$ for all~$i$, then $\bigcup_i B_i$ is $(\sum_i g_i)$-small above $\sigma$. 
\end{lemma}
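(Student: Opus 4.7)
The natural plan is to reduce to the case $n=2$, which I handle by induction on a witness tree; the general $n$ then follows by a straightforward induction on $n$, writing $g_1 + \dots + g_n = (g_1 + \dots + g_{n-1}) + g_n$ and $B_1 \cup \dots \cup B_n = (B_1 \cup \dots \cup B_{n-1}) \cup B_n$.

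So the heart of the argument is the following $n=2$ claim: if $B_1$ is $g_1$-small above~$\sigma$ and $B_2$ is $g_2$-small above~$\sigma$, then $B_1 \cup B_2$ is $(g_1 + g_2)$-small above~$\sigma$. I will prove this by the contrapositive. Suppose $T$ is a finite tree witnessing that $B_1 \cup B_2$ is $(g_1 + g_2)$-big above~$\sigma$; i.e., $T$ is $(g_1+g_2)$-bushy above~$\sigma$ and all its leaves lie in $B_1 \cup B_2$. From $T$ I extract either a finite tree witnessing that $B_1$ is $g_1$-big above~$\sigma$, or one witnessing that $B_2$ is $g_2$-big above~$\sigma$, by induction on the depth (length of the longest branch) of~$T$.

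In the base case $T = \{\sigma\}$, the stem $\sigma$ is a leaf of~$T$, so $\sigma \in B_1 \cup B_2$, and the singleton $\{\sigma\}$ is trivially $h$-bushy above $\sigma$ for any~$h$. For the inductive step, $\sigma$ has at least $g_1(\card{\sigma}) + g_2(\card{\sigma})$ immediate children $\tau_1, \dots, \tau_k$ in~$T$, and the subtree $T_i \subseteq T$ rooted at $\tau_i$ is $(g_1 + g_2)$-bushy above $\tau_i$ with leaves in $B_1 \cup B_2$ and of strictly smaller depth. Applying the induction hypothesis to each $T_i$, every index $i$ is of at least one of two types: either $B_1$ is $g_1$-big above~$\tau_i$ (``type~1'') or $B_2$ is $g_2$-big above~$\tau_i$ (``type~2''). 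Assigning each index a unique type arbitrarily, we obtain a partition $\{1,\dots,k\} = I_1 \sqcup I_2$ with $\card{I_1} + \card{I_2} = k \geq g_1(\card{\sigma}) + g_2(\card{\sigma})$, so by pigeonhole, either $\card{I_1} \geq g_1(\card{\sigma})$ or $\card{I_2} \geq g_2(\card{\sigma})$. In the first case, choose $g_1(\card{\sigma})$ children from $I_1$ and attach above each the witnessing $g_1$-bushy subtree with leaves in $B_1$; the resulting tree is $g_1$-bushy above~$\sigma$ with all leaves in $B_1$. The second case is symmetric.

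There is no real obstacle here: the argument is essentially an inductive pigeonhole on branching counts. The only point requiring care is picking a single type per child before pigeonholing, so that the branching count at the root is genuinely $\geq g_1(\card{\sigma})$ or $\geq g_2(\card{\sigma})$ rather than being double-counted.
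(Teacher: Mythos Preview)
The paper does not actually prove this lemma: it is stated together with Lemmas~\ref{lem:concatenation-prop} and~\ref{lem:small-set-closure} and the reader is referred to the Khan--Miller survey for details. Your argument is correct and is essentially the standard one: the contrapositive reduction to $n=2$, induction on the height of a witnessing bushy tree above its stem, and the pigeonhole step on the children of the stem are exactly how this lemma is usually proved. The only cosmetic point is that when you speak of ``the subtree $T_i \subseteq T$ rooted at $\tau_i$'' you should make clear that the relevant well-founded measure is the height of the tree \emph{above its stem}, so that passing from stem $\sigma$ to stem $\tau_i$ genuinely decreases it; your parenthetical ``length of the longest branch'' is fine once read this way.
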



\begin{lemma}[Small set closure]\label{lem:small-set-closure}
We say that $B  \subseteq \omega^{<\omega}$ is \emph{$g$-closed} if whenever $B$ is $g$-big above a string $\rho$ then $\rho \in B$. Accordingly, the \emph{$g$-closure} of any set~$B \subseteq \omega^{<\omega}$ is the set $C = \set{\tau \in \omega^{<\omega} : B \mbox{ is $g$-big above } \tau}$. If $B$ is $g$-small above a string~$\sigma$, then its closure is also $g$-small above $\sigma$.
\end{lemma}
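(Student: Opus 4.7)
The plan is to prove the contrapositive: if the $g$-closure $C$ of $B$ is $g$-big above $\sigma$, then $B$ itself must already be $g$-big above $\sigma$. This feels like exactly the kind of situation the concatenation lemma (Lemma~\ref{lem:concatenation-prop}) was designed to handle, and indeed it is an almost immediate consequence.

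More concretely, suppose $C$ is $g$-big above $\sigma$. Then by definition of bigness there is a finite tree $T$ that is $g$-bushy above $\sigma$ whose every leaf lies in $C$. For each leaf $\tau$ of $T$, the fact that $\tau \in C$ means by definition of $C$ that $B$ is $g$-big above $\tau$, so there exists a finite tree $T_\tau$ that is $g$-bushy above $\tau$ and whose leaves are all in $B$. I would then form the tree $T'$ obtained by gluing each $T_\tau$ onto the corresponding leaf $\tau$ of $T$. Since above $\sigma$ every non-leaf node of $T'$ is either a non-leaf node of $T$ (and hence has at least $g(|\cdot|)$ children coming from $T$) or a non-leaf node of some $T_\tau$ (and hence has at least $g(|\cdot|)$ children coming from $T_\tau$), the tree $T'$ is $g$-bushy above $\sigma$. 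Its leaves are exactly the leaves of the $T_\tau$'s, which all belong to $B$. Thus $B$ is $g$-big above $\sigma$, contradicting the hypothesis. (Equivalently, one can phrase this as a direct application of Lemma~\ref{lem:concatenation-prop} with $A$ the set of leaves of $T$ and $S_\tau = B$ for each $\tau \in A$.)

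There is essentially no obstacle here; the only point that deserves a moment's care is the gluing step, where one needs to notice that comparability with the stem is preserved and that no node's branching requirement is reduced by the splice, since the leaves of $T$ (which had no children in $T$) acquire exactly the children prescribed by $T_\tau$, all of which extend $\tau$. Once this is observed, the conclusion is immediate.
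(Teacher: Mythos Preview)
Your argument is correct: the contrapositive reduces immediately to the concatenation lemma (Lemma~\ref{lem:concatenation-prop}), and your gluing description is exactly what that lemma encodes. Note that the paper does not actually supply its own proof of this lemma---it is one of the three lemmas explicitly stated without proof, with a reference to~\cite{mushfeq} for details---so there is nothing to compare against; what you have written is the standard argument.
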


As explained in~\cite{mushfeq}, Lemma~\ref{lem:small-set-closure} is very useful in our constructions. We are given a ``bad'' set $B$ of nodes, which is $g$-small above $\sigma$ where $\sigma$ is a partial approximation of the object we are constructing. We want to extend $\sigma$ still avoiding $B$ and in particular preserving $g$-smallness
of $B$. Lemma~\ref{lem:small-set-closure} enables us to consider w.l.o.g. that if $\rho$ is an extension of $\sigma$ which
does not preserves $g$-smallness of $B$, then $\rho$ is already on~$B$.\\

The next lemma is very simple and yet central in this paper. It expresses the fact that if a set~$B$ is sufficiently small in an~$h$-bushy tree~$T$, then there is only a small probability that a random path of the tree meets~$B$ (has a member of~$B$ as prefix). By ``random path", we mean the probability distribution over paths induced by a downward random walk where one starts at the root and at each step goes from a node to one of its children, all children being given the same probability of being picked. 
%

\begin{lemma}\label{lem:small-vs-measure}
Fix two functions positive functions $g$ and $h$ with $g \leq h$. If $T$ is an infinite tree $h$-bushy above $\lambda$
and $B \subseteq T$ is a $g$-small above $\lambda$, then the probability that
a random path of~$T$ avoids $B$ is greater than
\[
\prod_{i \in \omega} \left( 1 - \frac{g(i)}{h(i)} \right)
\]
\end{lemma}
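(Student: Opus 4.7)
The plan is to reduce to the $g$-closure of $B$ and then analyze the random walk level by level.

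First, I would replace $B$ by its $g$-closure $B^\ast = \{\tau : B \text{ is } g\text{-big above } \tau\}$. By Lemma~\ref{lem:small-set-closure}, $B^\ast$ is still $g$-small above $\lambda$, and since $B \subseteq B^\ast$, any path avoiding $B^\ast$ automatically avoids $B$. So it suffices to bound the probability that the random walk stays out of $B^\ast$.

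The central combinatorial claim is: for every $\tau \in T \setminus B^\ast$, strictly fewer than $g(|\tau|)$ of $\tau$'s children in $T$ lie in $B^\ast$. Suppose for contradiction that $\tau$ has $g(|\tau|)$ children $\tau_1,\dots,\tau_{g(|\tau|)}$ in $B^\ast$. By definition of $B^\ast$, each $\tau_j$ has $B$ being $g$-big above it, witnessed by some finite $g$-bushy tree $T_j$ with leaves in $B$. Gluing these $T_j$'s under $\tau$ produces a finite $g$-bushy tree above $\tau$ whose leaves are in $B$, which contradicts $\tau \notin B^\ast$. (Equivalently, this is the contrapositive form of Lemma~\ref{lem:concatenation-prop}.)

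With this in hand, I would carry out the probabilistic estimate inductively. Conditioned on the walk being at some $\tau \in T \setminus B^\ast$ at level $i$, there are $k \geq h(i)$ children to choose uniformly from, at most $g(i)-1$ of which are in $B^\ast$; hence the walk stays outside $B^\ast$ at the next step with probability at least
\[
1 - \frac{g(i)-1}{k} \;\geq\; 1 - \frac{g(i)-1}{h(i)} \;>\; 1 - \frac{g(i)}{h(i)}.
\]
Multiplying these conditional probabilities for $i=0,1,\dots,n-1$ gives a lower bound on the probability of avoiding $B^\ast$ through level $n$ which is strictly larger than the finite product $\prod_{i<n}(1-g(i)/h(i))$; taking $n \to \infty$ (the events nest) yields the infinite product as a lower bound, with strict inequality coming for free from the first level already.

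The only thing to be slightly careful about is what to do if some $\tau$ is a leaf of $T$; but since $B \subseteq T$, any leaf of $T$ not already in $B$ automatically avoids $B$, so such nodes only help us. I do not expect any real obstacle here — the proof is essentially the standard union bound dressed up in bushy-tree language, with the $g$-closure step serving as the one non-trivial ingredient.
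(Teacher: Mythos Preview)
Your proposal is correct and follows essentially the same approach as the paper: pass to the $g$-closure, observe via the concatenation/closedness argument that fewer than $g(i)$ children of a ``good'' node can be bad, and induct level by level to obtain the product bound. Your treatment is in fact slightly more careful than the paper's in extracting the \emph{strict} inequality the lemma claims.
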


Note that this quantity is positive if and only if $\sum g(i)/h(i) < \infty$, due to the identity $\prod_{i \in \omega}\p{1 - e_i} = \exp\p{- \sum_{i \in \omega} \ln(1-e_i)}$ and the asymptotic estimate $-\ln(1-x) =x+o(x)$. 
\begin{proof}
Without loss of generality, we can assume that $B$ is $g$-closed (otherwise, take its closure). We prove by induction over $n$ that the probability of having avoided~$B$ by the time we reach depth~$n$ is at least $\prod_{i=0}^{n-1} \big(1-g(i)/h(i)\big)$ (a quantity equal to~$1$ for $n=0$, by convention). The lemma immediately follows from this fact. 

The base case $n = 0$ is trivial as~$\lambda$ is the only such node and $B$ is $g$-small above~$\lambda$.
Assume it is true for some depth $n$. Suppose we have reached a node $\rho \in T$ of length~$n$ such that $B$
is $g$-small above $\rho$. By $h$-bushiness of $T$, $\rho$ has at least $h(n)$ immediate
extensions. If~$B$ were $g$-big above $g(n)$-many immediate extensions of $\rho$
then $B$ would be also $g$-big above~$\rho$, by $g$-closedness. Hence $B$ is $g$-small above at least $h(n) - g(n)$
immediate extensions of $\rho$. It follows easily that, conditional to having reached $\rho$, the probability to avoid~$B$ at the next level is at least $1-g(n)/h(n)$. This finishes the induction. 
\end{proof}


\begin{remark}
Lemma~\ref{lem:small-vs-measure} makes no computability assumption on~$T$ and~$B$. However, when~$T$ is computable, taking a path of~$T$ at random can be performed using a probabilistic algorithm, which will then produce a path avoiding~$B$ with probability at least $\prod_{i=0}^{n-1} \big(1-g(i)/h(i)\big)$ (and this still makes no computability assumption about~$B$). 
\end{remark}

Now we see how randomness helps us compute DNC functions: take a computable function~$h$ such that $\prod_i (1-2/h(i)) > 0$ (which is equivalent to $\sum_i 1/h(i) < \infty$; for example, $h(n)=(n+1)^2$ will do) and take an $h$-bushy tree~$T$. Now, take a path of~$T$ at random. Since, as we saw above, $B_{DNC}$ is 2-small in~$T$, the previous lemma tells us that the probability to get a path of the tree which avoids~$B$, and thus is a DNC function, is at least $\prod_i (1-2/h(i))$, which is positive.




\subsection{Fireworks arguments}\label{sec:fireworks}

A fireworks argument can be seen as ``probabilistic forcing"  for $\Sigma_1$ properties. 
It is best illustrated by the following theorem, due to Kautz: there exists a probabilistic 
algorithm which with positive probability produces a $1$-generic real (more precisely, 
every 2-random real computes a 1-generic real). 
Let us present the argument in a more abstract way so as to better fit the setting of 
the next section, where we will (implicitly) force with bushy trees. Let $(\mathbb{P}, \leq)$ 
be a computable partial order, and let $W_1$, $W_2$, ... be a list of uniformly c.e.\ subsets of~$\mathbb{P}$. We want to get a probabilistic algorithm to generate an infinite list 
$p_0 \geq p_1 \geq p_2 \geq ...$ such that for every set $W_i$, 
the requirement $\mathcal{R}_i$ holds, where $\mathcal{R}_i$ says that there exists 
a~$j$ such that either $p_j \in W_i$ holds or for every $q \leq p_j$, $q \notin W_i$. (For example, to get Kautz's result, one takes for $\mathbb{P}$ the set of finite strings, where $\sigma \leq \tau$ if $\sigma$ extends $\tau$ and the $W_i$ are all c.e.\ sets of strings. In this paper, $\mathbb{P}$ will typically consist of a set of finitely represented bushy trees, and $T' \leq T$ will mean that $T'$ is a subset of~$T$). 

Suppose we have already built the sequence of $p_j$ up to some $p_k$, and we want to satisfy the requirement~$\mathcal{R}_i$. If we did not care about the effectivity of the construction, we could easily satisfy the requirement by distinguishing two cases:

\begin{itemize}
\item[] Case 1 (the $\Pi_1$ case): there is no $p' \leq p_k$ such that $p' \in W_i$. In this case nothing needs to be done, $\mathcal{R}_i$ is already satisfied!
\item[]Case 2 (the $\Sigma_1$ case): there is some $p' \leq p_k$ such that $p' \in W_i$. Here it suffices to search for such a $p'$ (which can even be done effectively since $W_i$ is c.e.), and set $p_{k+1}=p'$ after which $\mathcal{R}_i$ is satisfied. 
\end{itemize}

Although both cases only require effective actions (do nothing or effectively search for a $p'$, respectively), the problem is that one cannot computably distinguish between them, as being in Case~2 is only a c.e.\  event (hence the name `$\Sigma_1$ case'). And indeed in general there is no deterministic algorithm to build a sequence of $p_j$ satisfying all requirements (otherwise one could, for example, computably build a $1$-generic). 

There \emph{is} however, a \emph{probabilistic algorithm} which builds such a sequence of $p_j$'s with positive probability. It works as follows. We start with any $p_0 \in \mathbb{P}$. Next, for each~$i$, we pick an integer~$n_i$ at random between~$1$ and $N(i)$, where~$N$ is a fixed computable function (the faster~$N$ grows, the higher the probability of success of the algorithm will be).  Moreover, for each~$i$, we let $c_i$ be a counter, initialized at~$0$, which will count how many wrong passive guesses we have made for requirement $\mathcal{R}_i$.


By `passive guess', we mean that in the construction of the $p_j$'s, we assume at some step~$k$ that we are in the $\Pi_1$ case, i.e., that no $q \leq p_k$ is in $W_i$. It is a passive guess because as we saw, if it is indeed true, requirement $\mathcal{R}_i$ is already satisfied and no particular action is needed. Of course, this guess may be incorrect but since $W_i$ is c.e., if it is incorrect we will discover this at some later stage $k'$ of the algorithm. When this happens, we make a new assumption that there are no $q \leq p_{k'}$ in~$W_i$ and so on. If at any point we make a correct passive guess, requirement $\mathcal{R}_i$ is satisfied. There is however a danger that \emph{all} the passive guesses we make for requirement $\mathcal{R}_i$ turn out to be wrong. What we do is use the number $n_i$ as a cap on how many times we allow ourselves to make a wrong passive guess for $\mathcal{R}_i$. If for some~$i$ the cap $n_i$ is reached at stage~$k$, we then make the opposite guess (``active guess"), i.e., that there \emph{is} a $q \leq p_k$ such that $q \in W_i$ holds, try to find such a $q$ and take it as our $p_{k+1}$, thus satisfying requirement~$\req{i}$. This guess is ``active" because we need to find such a~$q$ before doing anything else. But at least, as we said above, since $W_i$ is a c.e.\ set, such a~$q$ can be effectively found if it exists. Then we take $p_{k+1}=q$.

This active/passive guessing strategy is still not guaranteed to work, as one bad case remains: if we make an \emph{incorrect active guess} for some~$\mathcal{R}_i$, we then get stuck while waiting for a~$q$ in~$W_i$ which we will never find. However, this is the only bad case: if it does not happen for any~$\mathcal{R}_i$, then the algorithm succeeds in producing a sequence $p_0 \geq p_1 \geq p_2 \geq ...$ as wanted. Indeed, for every~$i$, either it makes a good passive guess for $\mathcal{R}_i$ that never turns out to be wrong, meaning that for some $p_s$, no $q \leq p_s$ is in $W_i$, or it makes a good active guess that some $q \leq p_s$ is in~$W_i$, eventually finds such a~$q$, and take it as an extension.

Why can the probability of success of the algorithm be made arbitrarily close to~1? The reason is the following key observation: For all~$i$, if all~$\{n_j: j \not= i\}$ are fixed, then there is at most one value of the random variable~$n_i$ for which we get stuck in a loop while trying to satisfy requirement~$\mathcal{R}_i$. Indeed, suppose we get stuck having chosen a value $n_i$. This means that we made $n_i-1$ incorrect passive guesses and then one incorrect active guess. Any other choice $n'_i<n_i$ would have been fine, because our $n'_i$-th guess would then have been a correct active guess and a~$q$ in~$W_i$. And any other choice $n'_i>n_i$ would also have been fine, as in this case our $n_i$-th guess would have been a correct passive guess. Thus, the probability to get stuck because of requirement~$\mathcal{R}_i$ is at most $1/N(i)$, giving a total probability of success of the algorithm of at least $1-\sum_i 1/N(i)$, which can be made arbitrarily close to~$1$ for a well chosen~$N$. \\

Now the last thing we need to check is how much randomness (in the sense of algorithmic randomness) is needed for this probabilistic algorithm to work. Let us explain what we mean. A probabilistic algorithm is nothing but a Turing functional~$\Gamma$ with access to a `random' (in the classical sense) oracle~$R \in \cs$. This is the~$R$ used by the algorithm to make its random decisions. What we have argued above is that the failure set of the algorithm
\[
U = \{ R \mid \Gamma^R \text{ is either undefined or fails to satisfy some~$\mathcal{R}_i$ } \}
\]
has probability $<1$ (by `undefined' we mean that the algorithm does not produce an infinite sequence of $p_j$'s).\\

In fact, this probability can be made arbitrarily small, therefore fireworks arguments do not give us one algorithm but a uniform family of algorithms: For any given integer~$m$, one can design, uniformly in~$m$, a probabilistic algorithm which fails with probability at most $2^{-m}$ (it suffices to choose the function~$N$ such that $\sum 1/N(i) < 2^{-m}$). Call $\Gamma_m$ the corresponding algorithm, and consider
\[
U_m = \{ R \mid \Gamma_m^R \text{ is either undefined or fails to satisfy some~$\mathcal{R}_i$ } \}
\]
which is of measure at most $2^{-m}$. The set $\bigcap_m U_m$, which is the set of $R$'s on which \emph{all} the algorithms $\Gamma_m$ fail is a null set. This means that if~$R$ is `sufficiently random', in the sense of effective randomness, it does not belong to all~$U_m$ and thus some algorithm $\Gamma_m$ succeeds using~$R$. Which level of algorithmic randomness is actually needed? One should observe that every~$U_m$ is in fact an effectively open  set relatively to~$\emptyset'$. Indeed, as we have seen, the only case the algorithm $\Gamma_m$ fails is when it waits in vain for a $q$ extending some condition~$p$ and belonging to some~$W_i$. If such a situation happens, it does so at some finite stage, i.e., having used only a finite initial segment of~$R$, hence $U_m$ is open. Moreover, testing whether the algorithm $\Gamma_m$ is stuck at a given stage can be done using~$\emptyset'$: indeed, the predicate $[(\forall q \leq p) \; q \notin W_i]$ is a $\Pi_1$-predicate uniformly in $p$ and~$i$. 

Thus, $(U_m)_{m \in \N}$ is a $\emptyset'$-Martin-L\"of test, which shows that every $2$-random real computes, via some functional $\Gamma_m$, an infinite sequence $p_0 \geq p_1 \geq p_2 \geq ...$ such that for every $i$ there is a~$j$ such that either $p_j \in W_i$ holds or for every $q \leq p_j$, $q \notin W_i$, as wanted.

\section{Main result}

We shall now see how to combine fireworks arguments with bushy tree forcing to prove Theorem~\ref{thm:main-theorem}. We first provide an informal presentation of the proof. Full details will be given in the next section.

\subsection{Proof overview}\label{sec:overview}


For this construction, we will need a hierarchy of very fast growing computable functions
\[
g_0 \ll g_1 \ll g_2 \ll \ldots
\]
($g \ll g'$ is an informal notation: it means that $g'$ grows `much faster' than~$g$) 
and another fast-growing function~$h$ (which is meant to grow faster that all the $g_i$ but with certain restrictions). At this point, we do not specify precisely what functions $g_i$ and $h$ we take. We will see during the construction which properties they need to have to make the argument work. \\

Contrary to most bushy tree arguments, the whole construction will happen within a single tree~$T$, which is exactly~$h$-bushy:
\[
T = \{\sigma \in \omega^{<\omega} \; : \; (\forall i)\; \sigma(i) < h(i)\}
\]
Typically, a bushy tree forcing argument constructs a sequence $T_0 \supseteq T_1 \supseteq \ldots$ of bushy trees, and the path obtained by forcing is in the intersection of all of those. We will not need such a sequence in our argument. However, some steps of the construction can be understood as ``locally" taking a subtree of~$T$. What we keep from other bushy tree arguments is the idea of maintaining during the construction a small set of bad strings to be avoided. But again, there is a difference in our construction: to build a DNC$^X$ function by forcing, one usually starts with the initial small bad set
\[
B_{\DNC} = \set{\sigma \in \omega^{<\omega} : (\exists e < \card{\sigma})\, \phi^X_e(e) \downarrow = \sigma(e) }
\]
We will not do this as we need our bad set of strings to be c.e.\ at all times. Our fireworks argument will (with high probability) build an~$A$ which does not compute any Martin-L\"of random real. It is only an \emph{a posteriori} analysis of the construction which will allow us to conclude that~$A$ is also DNC$^X$ with high probability. In the absence of any other requirement, we would just build $A$ value by value, picking for each~$n$ the value of $A(n)$ at random between~$0$ and $h(n)-1$. This would give us a probability of avoiding~$B_{\DNC}$ of at least $\prod_n (1-1/h(n))$, which is positive for $h$ fast-growing.  But of course there are other requirements our construction needs to meet, namely all the requirements of the form:

\begin{center}
$\req{\Gamma,d}$: either $\Gamma^A$ is partial or there is an~$n$ such that $K(\Gamma^A \uh n) < n-d$
\end{center}
where~$d$ is an integer, $\Gamma$ is a Turing functional from $\omega^{<\omega}$ to~$\cs$, and $K$ is the prefix-free Kolmogorov complexity function. (Here we use the Levin-Schnorr theorem that a real $Y$ is Martin-L\"of random if and only if for some constant~$d$, and all~$n$, $K(Y \uh n) \geq n-d$). \\

Let us see how we would ideally like to satisfy such a (single) requirement. Suppose we have already built some string~$\sigma \in T$ of length~$k$ and consider the set
\[
S = \set{\tau \in T : |\Gamma^\tau|  \geq h(k-1)}
\]
where $|\Gamma^\tau|  \geq h(k-1)$ means that the Turing reduction~$\Gamma$ produces at least~$h(k)$ bits of output on input~$\tau$, and distinguish two cases:

\begin{itemize}
\item Case 1: $S$ is $g_k$-small above~$\sigma$. In this case there is essentially nothing to worry about. We can just continue to build $A$ by making random choices. The probability that we hit the set $S$ at some point is small, namely it is at most $1-\prod_{i \geq k+1} (1-g_k(i)/h(i))$ (by Lemma~\ref{lem:small-vs-measure}), and recall that $h(i) \gg g_{k+1}(i) \gg g_k(i)$ for $i \geq k+1$. And if we do not hit~$S$, then $\Gamma^A$ will end up being partial, therefore satisfying $\req{\Gamma,d}$. 

\item Case 2: $S$ is $g_k$-big above~$\sigma$. Each element~$\tau \in S$ is such that $|\Gamma^\tau|  \geq h(k-1)$, therefore we can decompose~$S$ as
\[
S = \bigcup_{|\rho|=h(k-1)} S_{\rho}~  ~ \text{where} ~ ~ S_\rho = \{\tau \in S \mid \Gamma^\tau \ext \rho\}
\]
There are $2^{h(k-1)}$ strings of length~$h(k-1)$, therefore by Lemma~\ref{lem:smallness-add}, there must be a $\rho^*$ such that $S_{\rho^*}$ is $\left(g_{k}/2^{h(k-1)}\right)$-big above~$\sigma$ (note that in this expression, $g_k$ is a function while $h(k-1)$ is just an integer). Since being big is a $\Sigma_1$-property, such a~$\rho^*$ can be found effectively knowing~$\sigma$ and $\Gamma$, and thus the first such $\rho^*$ found with this effective search must satisfy
\begin{equation}\label{eq:basic-kolmo-bound}
K(\rho^*) \leq K(\sigma) + K(\Gamma) + c \leq 2 \sum_{i \leq k-1} \log h(i) + K(\Gamma) + c'
\end{equation}
for some fixed constants~$c, c'$ (the last term is due to the fact that $\sigma$ is a list of $k-1$ integers such that the $i$-th integer is less than $h(i)$, therefore has complexity less than $2\log h(i)$). Since $|\rho^*|=h(k-1)$ we have 
\[
K(\rho^*) - |\rho^*|  \leq 2 \sum_{i \leq k-1} \log h(i) - h(k-1) + K(\Gamma) + c'
\]

If~$h$ grows fast enough, and~$k$ is sufficiently large then this last inequality implies $K(\rho^*) \leq |\rho^*|-d$. Thus, any~$A$ which passes through a node in~$S_{\rho^*}$ satisfies requirement~$\req{\Gamma,d}$. Moreover, since~$S_{\rho^*}$ is $\left(g_{k}/2^{h(k-1)}\right)$-big above $\sigma$, this means by definition that there is a finite $\left(g_{k}/2^{h(k-1)}\right)$-bushy tree $T' \subseteq T$ of stem~$\sigma$ all of whose leaves are in~$S_{\rho^*}$. Then, what we can do is effectively find the tree $T'$ and temporarily restrict our random walk to $T'$ (picking at each step the next value at random among nodes of $T'$) until we reach a leaf of~$T'$. This guarantees the satisfaction of $\req{\Gamma,d}$ and the probability that we hit $B_{\DNC}$ during this temporary restriction is less than $1-\prod_{i \geq k} (1-2^{h(k-1)}/g_k(i))$, which can be made small if~$g_k$ is well-chosen.   \\
\end{itemize}

Of course, the problem is again that, having built $\sigma$, we cannot effectively determine whether we are in Case 1 or in Case 2. This is where the fireworks argument comes into play. We are going to pick a number $n=n(\Gamma,d)$ between $1$ and some large $N=N(\Gamma,d)$, and assume up to $n$ times that we are in Case 1 (the $\Pi_1$ case), and if proven wrong $n$ times, we will then wait until proven that we are in Case 2 (the $\Sigma_1$ case), and if so, implement the above strategy for Case 2. As with other fireworks arguments, the probability that we decide to wait at the wrong moment is at most~$1/N$, which we can thus make arbitrarily small. \\

These considerations are enough to give us a strategy which ensures, with arbitrarily high probability, the satisfaction of a \emph{single} requirement $\req{\Gamma,d}$ while avoiding the set $B_{\DNC}$ with high probability. However, there is a subtle point to address when we try to satisfy several requirements in parallel. Indeed, what can happen is that the strategy for a first requirement has made the assumption that some set $S$ is $g_k$-small in~$T$ - and thus small probability of being hit - while a strategy for a second requirement needs to make a temporary restriction to a tree $T'$. While the probability to hit~$S$ was small for a random walk within~$T$, it could happen that the random walk restricted to $T'$ has a much greater probability to hit~$S$. This is what the assumption $g_i \ll g_j$  for $i<j$ takes care of: Whenever a strategy needs to make such a restriction to a $\left( g_k/2^{h(k-1)}\right)$-bushy subtree, we will have that $g_k/2^{h(k-1)}$ grows much faster than the $g_j$'s previously considered in the proof, and thus, if a set $S$ is $g_j$-small, it will still be unlikely that we hit~$S$ while choosing a path of a $\left( g_k/2^{h(k-1)}\right)$-bushy tree at random for any $k>j$.

\subsection{The full algorithm and formal proof of correctness}

We now state the precise theorem regarding the probabilistic algorithm discussed above. The analysis of the level of effective randomness required will be done separately. 

\begin{theorem}
Let $X \in \cs$ and $h$ be a sufficiently fast-growing computable function. For every rational $\varepsilon>0$, one can effectively design a probabilistic algorithm (= Turing machine with random oracle) which, with probability at least $1-\varepsilon$, produces an $A \in \omega^\omega$ such that (1) $A$ is $\DNC^X_h$ and (2) $A$ computes no Martin-L\"of random real. 
\end{theorem}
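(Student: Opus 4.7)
The plan is to turn the informal strategy of Section~\ref{sec:overview} into a formal construction, carefully choosing the parameters so that the various smallness estimates can be summed by a union bound into a single $\varepsilon$. First I would fix an effective enumeration of all requirements $\req{i} = \req{\Gamma_i,d_i}$, where $\Gamma_i$ ranges over Turing functionals $\omega^{<\omega}\to \cs$ and $d_i$ over positive integers. Then I would choose, in a single recursion on $k$, a computable $h$ and computable functions $g_0 \ll g_1 \ll \ldots$ together with a computable bound $N(i)$ that simultaneously satisfy: (a) $\sum_i 1/N(i) < \varepsilon/3$ (for the fireworks failure bound), (b) $\prod_i (1 - 1/h(i)) > 1-\varepsilon/3$ (so the unrestricted random walk in $T$ avoids $B_{\DNC^X}$ with high probability), (c) for every $k$, $\prod_{i\ge k}\bigl(1 - 2^{h(k-1)}/g_k(i)\bigr) > 1-\varepsilon/(3\cdot 2^{k+1})$ (so that a temporary restriction to a $(g_k/2^{h(k-1)})$-bushy subtree still mostly avoids $B_{\DNC^X}$), and (d) for every $k$ and every $i\le k$, $2\sum_{j< k}\log h(j) - h(k-1) + K(\Gamma_i) + c' < -d_i$ so that the Kolmogorov bound of equation~\eqref{eq:basic-kolmo-bound} (suitably reindexed) kicks in.

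Next I would describe the algorithm proper. It reads its random oracle $R$ in two disjoint blocks: one block to sample, for each $i$, an integer $n_i \in \{1,\dots,N(i)\}$ uniformly at random; another block, read bit by bit, to drive the construction of $A$ along the tree $T$. The algorithm proceeds in stages, at each stage $k$ having a current string $\sigma_k\in T$ of length $k$, a current active-requirement index (if any), and, for each $i$, a counter $c_i$ of how many times the $\Pi_1$ guess for $\req{i}$ has been refuted. At stage $k$, for the least $i\le k$ not yet marked satisfied, the algorithm checks effectively whether some $\rho$ of length $h(k-1)$ witnesses that $S_\rho = \{\tau\succeq\sigma_k : \Gamma_i^\tau\succeq\rho\}$ is $(g_k/2^{h(k-1)})$-big above $\sigma_k$ (a $\Sigma_1$ property). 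If $c_i < n_i$, the algorithm makes a passive guess: if no witness has yet appeared, it does nothing and picks $A(k)$ uniformly in $\{0,\dots,h(k)-1\}$; if a witness appears later, it increments $c_i$ and restarts. Once $c_i$ reaches $n_i$, the algorithm commits to an active guess: it waits for a witness $\rho^*$, effectively builds the finite $(g_k/2^{h(k-1)})$-bushy tree $T'$ above $\sigma_k$ with leaves in $S_{\rho^*}$, and performs a uniformly random walk within $T'$ to extend $\sigma_k$ to some leaf, then marks $\req{i}$ satisfied and continues.

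For the analysis I would separate the bad events. Failure of type (I): for some $i$, the algorithm commits to an active guess that is incorrect and hangs forever waiting for $\rho^*$; conditional on all $n_j$ with $j\ne i$, only one value of $n_i$ produces this, so the probability is at most $1/N(i)$, summing to at most $\varepsilon/3$. Failure of type (II): the random walk, during its unrestricted phases, enters $B_{\DNC^X}$; since $B_{\DNC^X}$ is $1$-small above $\lambda$ (the value to avoid at position $n$ is unique), Lemma~\ref{lem:small-vs-measure} gives probability at most $1-\prod_i(1-1/h(i)) < \varepsilon/3$. Failure of type (III): during the active phase for $\req{i}$ at stage $k$, the random walk in $T'$ enters $B_{\DNC^X}$; here $B_{\DNC^X}$ remains $1$-small above $\sigma_k$, $T'$ is $(g_k/2^{h(k-1)})$-bushy, and Lemma~\ref{lem:small-vs-measure} with the estimate (c) above bounds the cumulative probability by $\varepsilon/3$. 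A union bound over (I)--(III) gives total failure probability below $\varepsilon$. The Kolmogorov bound (d) shows that whenever the algorithm successfully executes the active branch for $\req{i}$, the resulting $A$ satisfies $K(\Gamma_i^A\uh h(k-1)) \le |\Gamma_i^A\uh h(k-1)| - d_i$, and whenever the algorithm stays passive forever on $\req{i}$ (a correct passive guess), no $\tau\succeq\sigma$ yields long enough output, so $\Gamma_i^A$ is partial; either way $\req{i}$ holds, and avoiding $B_{\DNC^X}$ makes $A\in \DNC^X_h$.

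The main obstacle, which determines the hierarchical choice of the $g_k$'s, is showing that the type-(III) failures summed over all active restrictions stay small, and in particular that the ``smallness'' used to justify passive guesses for earlier requirements is not destroyed once we commit to an active restriction for a later requirement. This is precisely handled by the inequality $g_k/2^{h(k-1)} \gg g_j$ for $k>j$: a set judged $g_j$-small in $T$ remains very sparse inside any $(g_k/2^{h(k-1)})$-bushy subtree, so a random walk in such a subtree still avoids it with probability close to $1$, and the bounds in (c) can be made to accommodate every future restriction. The remainder of the paper would then identify the effective randomness level needed to run the algorithm successfully, and show (following the template at the end of Section~\ref{sec:fireworks}) that the failure sets form a $\emptyset'$-Martin-L\"of test relative to $X$, so that any $Z$ which is both $X$-random and $2$-random lies outside some failure set and thus computes the desired $f\in\DNC^X_h$.
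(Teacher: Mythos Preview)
Your overall architecture matches the paper's: bushy-tree forcing inside a fixed exactly-$h$-bushy tree~$T$, with fireworks to decide between the $\Pi_1$ case ($S$ is $g_k$-small, keep walking) and the $\Sigma_1$ case (some $S_\rho$ is big, restrict temporarily). The parameter layering and the Kolmogorov estimate are essentially those of Section~\ref{sec:overview}.

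There is, however, a genuine gap in the verification. Your sentence ``whenever the algorithm stays passive forever on $\req{i}$ (a correct passive guess), no $\tau\succeq\sigma$ yields long enough output, so $\Gamma_i^A$ is partial'' is false: a correct passive guess asserts only that $S = \{\tau : |\Gamma_i^\tau| \geq h(k-1)\}$ is $g_k$-\emph{small} above~$\sigma$, not that it is empty. The random walk may still enter~$S$, and if it does, $\Gamma_i^A$ can be total with no complexity bound. You therefore need a fourth failure type---``for some~$i$, a correct passive assumption was made but the walk hit the corresponding small set~$S$''---and must bound its probability via Lemma~\ref{lem:small-vs-measure}. This is exactly the content of the paper's Claim~2, and it is the real reason the hierarchy $g_k/2^{h(k-1)} \gg g_j$ (for $j<k$) matters: when a later requirement forces a restriction to a $(g_k/2^{h(k-1)})$-bushy subtree, the walk inside that subtree must still avoid \emph{every earlier} $g_j$-small set with high probability. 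You gesture at this in your final paragraph, but your union bound (I)--(III) omits it and your condition~(c) mentions only $B_{\DNC^X}$, not the sets~$S$. A secondary issue is scheduling: ``the least $i\le k$ not yet marked satisfied'' never moves past~$0$ if $\req{0}$ settles on a correct passive guess, since passive guesses never mark a requirement satisfied; the paper instead gives every requirement attention infinitely often via a round-robin, each one maintaining a persistent smallness assumption (made at some stage~$k_0$, about $g_{k_0}$-smallness above $\sigma\uh k_0$) that is re-checked whenever it is visited.
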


Let~$m$ be the smallest integer such that $2^{-m+3} < \varepsilon$. Let us first define the functions $g_k$ and $h$ we alluded to above. Set $g_0$ to be the function defined by $g_0(n)=2^{n+m}$. Then, for all~$k>1$, inductively define 
\[
g_k(i) = \left\{  \begin{array}{l} 1~ \text{ if } ~  i<k \\  g_{k-1}(i) \cdot 2^{g_{k-1}(k-1)+i+m} ~ \text{otherwise} \end{array}  \right.
\]

Finally define for all~$k$
\[
h(k) = g_k(k) 
\]

Let us now give the details of the algorithm. First, number all the requirements $\req{\Gamma,d}$ and call $\req{i}$ the $i$-th requirement. As usual, we organize them in a way that all requirements receive attention infinitely often, and only one requirement receives attention at any given stage. We will see during the verification that a small extra assumption should be added, namely that every requirement should be considered for the first time at some `late enough' stage. \\

\noindent \textbf{Stage 0: Initialization}. The first thing we do is pick for all~$i$ a number $n_i$ at random between $1$ and $2^{i+m}$. Then, we initialize $\sigma$ to be the empty string. For all~$i$, set a counter~$c_i$ originally equal to~$0$.\\


\noindent \textbf{Loop (to be repeated indefinitely)}. Suppose that the values $\sigma(0), ..., \sigma(k-1)$ of $\sigma$ are already defined (the last one being between $0$ and $h(k-1)-1$). Assume some requirement $\req{i}=\req{\Gamma,d}$ receives attention. 

\begin{itemize}
\item[(a)] If this requirement receives attention for the first time, we make for this requirement the assumption that the set
\[
S = \set{\tau \in T : |\Gamma^\tau|  \geq h(k-1)}
\]
is $g_k$-small above the current~$\sigma$ (again, note that this is a $\Sigma_1$ assumption so if it is false it will be discovered to be so at some finite stage).

\item[(b)] If it does not receive attention for the first time, we check whether the current assumption made for this requirement still appears to be true at stage~$k$.

	\begin{itemize}
		\item[(1)] If it does, we maintain this assumption and simply pick the value of~$\sigma(k)$ at random between $0$ and $h(k)-1$.
		\item[(2)] If the assumption is discovered to be false, we increase our `error counter' $c_i$ by~$1$.
			\begin{itemize}
				\item[(i)] If the new value of $c_i$ remains less than $n_i$, we forget our previous assumption for requirement $\req{i}$ and make a new assumption: we now assume that the set $S = \set{\tau \in T : |\Gamma^\tau|  \geq h(k-1)}$ is $g_k$-small above (the current)~$\sigma$.
 
				\item[(ii)] If the new value of $c_i$ is equal to $n_i$, we then wait until we find, for some~$\rho$ of length~$h(k-1)$, a set $S_\rho = \{\tau \in S \mid \Gamma^\tau \ext \rho\}$  which is $\left(g_k/2^{h(k-1)}\right)$-big above~$\sigma$. When this happens, i.e., when we find a finite subtree~$T'$ of $T$ of stem~$\sigma$ which is $\left(g_k/2^{h(k-1)}\right)$-bushy above~$\sigma$ and all of whose leaves are in~$S_\rho$, we choose the next values of~$\sigma$ by a downward random walk restricted to~$T'$ until we reach a leaf of~$T'$ (note that we may never find such a tree in which case our algorithm gets stuck at this stage and thus fails to even return an infinite sequence). When a leaf is reached, we mark the $i$-th requirement as satisfied. 
			\end{itemize} 
	\end{itemize}

\end{itemize}

The sequence~$A$ returned by the algorithm is the minimal element of $\omega^{\leq \omega}$ extending all the values taken by $\sigma$ throughout the algorithm. We now turn to the verification of our algorithm, which we have already done for the most part in Section~\ref{sec:overview}. We do this via a series of claims.\\

\begin{claim}
The probability that the algorithm gets stuck at some substage of type (b.2.ii) (waiting to find a big tree $T'$ which does not exist) is at most $2^{-m+1}$. 
\end{claim}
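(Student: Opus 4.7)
The plan is to reduce the claim to a standard fireworks-style decoupling, applied one requirement at a time. For each requirement $\mathcal{R}_i$, I will show that the probability of getting stuck on $\mathcal{R}_i$ is at most $1/2^{i+m}$, and then sum over $i$ to obtain the stated bound $2^{-m+1}$.

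The core step is to fix all random choices apart from $n_i$, i.e., fix $n_j$ for $j \neq i$ together with the random bits that the algorithm consumes for its random walks. I claim that among the $2^{i+m}$ possible values of $n_i$, at most one leads to the algorithm being stuck on $\mathcal{R}_i$. The algorithm can only become stuck on $\mathcal{R}_i$ at substage (b.2.ii), namely at the unique stage $k_*$ where $c_i$ first reaches $n_i$, in which case, writing $\sigma_*$ for the string $\sigma$ at that moment, the search fails to find any $\rho$ of length $h(k_*-1)$ such that $S_\rho$ is $\big(g_{k_*}/2^{h(k_*-1)}\big)$-big above $\sigma_*$. By Lemma~\ref{lem:smallness-add} (smallness additivity over the $2^{h(k_*-1)}$ strings $\rho$), this failure is equivalent to $S = \bigcup_\rho S_\rho$ being $g_{k_*}$-small above $\sigma_*$.

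Now suppose the $n_i$ run gets stuck in this way, and take any $n'_i > n_i$. The $n'_i$ run performs exactly the same actions as the $n_i$ run up to and including the stage $k_*$ where the counter reaches $n_i$ (with the same $\sigma_*$ and consuming the same prefix of the random-walk bits). At stage $k_*$, however, the $n'_i$ run takes branch (b.2.i) rather than (b.2.ii): it adopts the fresh passive assumption ``$S$ is $g_{k_*}$-small above $\sigma_*$''. But that is precisely the statement we just verified from the failure of the $n_i$ active search, so this assumption will never be refuted; the counter $c_i$ in the $n'_i$ run therefore never exceeds $n_i$ and substage (b.2.ii) for $\mathcal{R}_i$ is never entered. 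Hence at most one $n_i$ causes stuckness on $\mathcal{R}_i$, so $\Pr[\text{stuck on }\mathcal{R}_i \mid \text{other randomness}] \leq 1/2^{i+m}$, and this bound transfers to the unconditional probability. Summing gives $\sum_{i \geq 0} 2^{-(i+m)} = 2^{-m+1}$.

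The main subtle point is the coupling: one must check that two runs with different $n_i$ but the same ``other randomness'' really do share the same prefix of random-walk bits up to the divergence stage $k_*$. This is automatic because, up to $k_*$, the two runs execute identical instructions and therefore consume random bits in the same order. What happens to the bits afterwards is irrelevant, since the argument only needs the $n'_i$ run to avoid substage (b.2.ii) for $\mathcal{R}_i$, and this is already guaranteed by the correctness of the new passive assumption made at $k_*$.
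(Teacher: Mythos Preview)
Your proof is correct and follows essentially the same approach as the paper. The paper's own proof is a one-sentence appeal to the ``standard fireworks calculation'' (spelled out earlier in Section~\ref{sec:fireworks}): fixing all other random choices, at most one value of $n_i$ causes the algorithm to get stuck on $\req{i}$, giving probability $\leq 2^{-i-m}$ per requirement and $\leq 2^{-m+1}$ in total. You have unpacked this in full, including the smallness-additivity step linking the failure of the active search to the correctness of the next passive assumption, and the coupling observation that runs with different $n_i$ agree up to the divergence stage. One tiny wording point: the failure of the search only \emph{implies} $g_{k_*}$-smallness of $S$ (via Lemma~\ref{lem:smallness-add}), not the converse, but only that direction is needed.
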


\begin{proof}
This is standard fireworks calculation: all other randomly chosen values being fixed, there is at most one value of $n_i$ which causes the algorithm to get stuck at (b.2.ii) because of requirement $\req{i}$. And since $n_i$ is chosen randomly between~$1$ and $2^{i+m}$, the probability that the algorithm gets stuck at (a.2.ii) because of requirement $\req{i}$ is at most $2^{-i-m}$. Thus, over all requirements, this gives a probability of at most $2^{-m+1}$. 
\end{proof}

\begin{claim}
Conditionally to our algorithm returning an infinite sequence, the probability that all requirements are met is at least $1-2^{-m+2}$.
\end{claim}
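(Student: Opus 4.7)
My plan is to bound per-requirement the probability of violation and then union-bound over $i$. For each requirement $\req{i} = \req{\Gamma,d}$, conditional on the algorithm producing an infinite output, exactly one of three scenarios occurs: (a) substage (b.2.ii) fires and completes for $\req{i}$ at some stage; (b) (b.2.ii) fires for $\req{i}$ but never completes (ruled out by the conditioning, via the previous claim); or (c) (b.2.ii) never fires for $\req{i}$ and the final assumption associated with $\req{i}$ is preserved throughout the remainder of the construction. In case (a), the restricted walk of (b.2.ii) drives $A$ through a leaf $\tau$ of the finite $(g_k/2^{h(k-1)})$-bushy subtree $T'$ with $\Gamma^\tau \succeq \rho$ for the chosen $\rho$ of length $h(k-1)$; inequality~\eqref{eq:basic-kolmo-bound} applied to the effective search that located $\rho$ yields $K(\Gamma^A \uh h(k-1)) \leq h(k-1) - d$ (provided $h$ grows fast enough and $k$ is sufficiently large), so $\req{i}$ is satisfied deterministically. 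In case (c), since refutation of the current assumption is a $\Sigma_1$ event that never occurs, the assumption itself must be correct: the set $S_i := \{\tau \in T : |\Gamma^\tau| \geq h(k_i-1)\}$ is genuinely $g_{k_i}$-small above the string $\sigma_i$ fixed at the moment of the last assumption, where $k_i = |\sigma_i|$. Then $\req{i}$ holds unless $A$ has some prefix in $S_i$, so the task reduces to bounding the probability of that event.

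To bound this probability I would prove a hybrid analogue of Lemma~\ref{lem:small-vs-measure}. From $\sigma_i$ onwards the walk alternates between \emph{free} segments (uniform among the $h(\ell)$ children of the current node in $T$) and \emph{restricted} segments (uniform inside a $(g_k/2^{h(k-1)})$-bushy finite subtree $T'$ triggered at a stage $k > k_i$ by another requirement). Replacing $S_i$ by its $g_{k_i}$-closure via Lemma~\ref{lem:small-set-closure}, at every still-good node of level $\ell$ at most $g_{k_i}(\ell)$ children lie in $S_i$. The hierarchy $g_0 \ll g_1 \ll \ldots$ is calibrated so that for every $\ell > k_i$ the per-step probability of entering $S_i$ is at most $2^{-\ell - m}$: in a free segment it equals $g_{k_i}(\ell)/g_\ell(\ell)$, which the recursion $g_\ell(\ell) = g_{\ell-1}(\ell) \cdot 2^{g_{\ell-1}(\ell-1) + \ell + m}$ makes minuscule; and in a restricted segment triggered at $k$ with $k_i < k \leq \ell$ it equals $g_{k_i}(\ell) \cdot 2^{h(k-1)}/g_k(\ell) = g_{k_i}(\ell)/(g_{k-1}(\ell) \cdot 2^{\ell + m})$, which is also at most $2^{-\ell - m}$ since $g_{k-1}(\ell) \geq g_{k_i}(\ell)$. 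Summing these per-step probabilities over $\ell > k_i$ produces a bound of $2^{-k_i - m + 1}$ on the bad event.

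Using the paper's scheduling convention that $\req{i}$ is first considered only once $|\sigma|$ is ``late enough'' --- in particular once $|\sigma| \geq i$ --- one has $k_i \geq i$, so the per-requirement bound becomes at most $2^{-i - m + 1}$. Summing $\sum_i 2^{-i - m + 1} \leq 2^{-m + 2}$ then yields the claim. The main obstacle is the precise form of the hybrid induction: one must verify that the $(g_k/2^{h(k-1)})$-bushy restrictions triggered by \emph{other} requirements strictly later than $k_i$ do not corrupt the $g_{k_i}$-smallness analysis of $S_i$, and handle carefully the boundary transitions between free and restricted phases --- in particular the transition at level $\ell = k_i$ itself, where the naive ratio degenerates to $1$. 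The latter is absorbed by the observation that this transition is committed before the assumption for $\req{i}$ is installed and hence belongs to the already-fixed string $\sigma_i$, rather than being subjected to the hybrid-Lemma bound.
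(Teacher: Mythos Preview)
Your proposal is correct and follows essentially the same route as the paper: the same case split (correct smallness assumption vs.\ successful (b.2.ii) restriction), the same use of the $g_{k_i}$-closure together with the hierarchy $g_{k'}(i)/2^{h(k'-1)} = g_{k'-1}(i)\cdot 2^{i+m} \geq g_{k_i}(i)\cdot 2^{i+m}$ to get a per-level failure probability of $2^{-\ell-m}$, and the same Kolmogorov bound~\eqref{eq:basic-kolmo-bound} for the (b.2.ii) case.

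The one place you diverge from the paper is the final summation. You conclude by asserting $k_i \geq i$ from the ``late enough'' scheduling convention; but in the paper that delay is calibrated solely to make the right-hand side of~\eqref{eq:kolmo-bound} drop below $-d$, not to enforce $k_i \geq i$. The paper instead observes that the ``good stages'' $k_i$ are pairwise distinct (only one requirement receives attention per stage), so $\sum_i 2^{-k_i-m+1} \leq \sum_{k\geq 0} 2^{-k-m+1} = 2^{-m+2}$ regardless of the scheduling order. Your argument is easily repaired by this same distinctness observation, so this is a cosmetic rather than a substantive gap.
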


\begin{proof}
Let us look at a given requirement~$\req{i}=\req{\Gamma,d}$. This requirement receives attention infinitely often until satisfied. This means that if the algorithm does not get stuck, one of the following happens
\begin{itemize}
\item at some point it makes for $\req{i}$ a correct assumption during substep (a) or (b.2.i) or,
\item the $i$-th requirement causes the algorithm to enter some substep (b.2.ii) but a tree~$T'$ is found thereafter. 
\end{itemize}

These two cases are mutually exclusive. In the first case, for some~$k$ a set 
\[
S=\set{\tau \in T : |\Gamma^\tau|  \geq h(k-1)}
\]

 is correctly assumed to be $g_k$-small above the current~$\sigma$. For any later stage of the algorithm (i.e., at stages $i \geq k$), the value of $\sigma(i)$ is chosen  at random among $\tilde{h}(i)$ values, where $\tilde{h}(i)$ is either equal to $h(i)$ or to $g_{k'}(i)/2^{h(k'-1)}$ for some $k'>k$ in case some other strategy has caused a temporary restriction of the tree. The latter quantity is the smaller of the two, and by definition of the $g_k$'s, it follows that $\tilde{h}(i) \geq 2^{i+m} g_k(i)$. 

By the calculations of the proof of Lemma~\ref{lem:small-vs-measure}, it follows that in this case, the probability of hitting a node of~$S$ at some later stage is at most
\[
1 - \prod_{i \geq k} \left( 1 - \frac{g_k(i)}{\tilde{h}(i)} \right) \leq 1 - \prod_{i \geq k} \left( 1 - \frac{g_k(i)}{2^{i+m} g_k(i)} \right) \leq \sum_{i \geq k} 2^{-i-m} \leq 2^{-k-m+1}
\]

In the second case the requirement $\req{i}$ is always satisfied. Indeed, in this case, we find a string~$\rho^*$ of length~$h(k-1)$ and finite tree~$T'$ whose leaves are contained in $S_{\rho^*} = \{\tau \in S \mid \Gamma^\tau \ext \rho^*\}$  and then make a random walk within~$T'$ until we reach a leaf. As explained in the previous section, we then have
\[
K(\rho^*) \leq K(\sigma) + K(\Gamma) + c \leq 2 \sum_{i \leq k-1} \log h(i) + c' + K(\Gamma)
\]
for some fixed constants $c,c'$. But $|\rho^*| = h(k-1)$, so

\begin{equation}\label{eq:kolmo-bound}
K(\rho^*) - |\rho^*| \leq 2 \sum_{i \leq k-1} \log h(i) + K(\Gamma) - h(k-1) + c''
\end{equation}
for some fixed $c''$. By construction of~$h$, $h(k-1) - 2 \sum_{i \leq k-1} \log h(i)$ tends to $\infty$, thus for~$k$ large enough (and this `large enough' can be found computably), the value of above expression is less than $-d$, which means that the requirement $\req{\Gamma,d}$ is satisfied as soon as we reach a leaf of~$T'$. We thus add the technical extra assumption that requirement $\req{\Gamma,d}$ is only allowed to receive attention at stage~$k$ if in the above expression the right-hand side is smaller than $-d$. This essentially changes nothing since it only prevents every requirement to receive attention for finitely many stages. \\

Given a requirement~$\req{\Gamma,d}$, we say that \emph{stage~$k$ is good for~$\req{\Gamma,d}$} when after having built~$\sigma \uh k$, in the next iteration of the loop, $\req{\Gamma,d}$ receives attention and either a true assumption is made at steps (a) or (b.2), or stage (b.2.ii) is reached and a tree~$T'$ is found. To every requirement corresponds exactly one good stage (and no two requirements have a good stage in common). As we have just argued, if $k$ is the good stage for a requirement, the probability that the requirement is satisfied, conditional to the algorithm returning an infinite sequence, is at least $1-2^{-k-m+1}$. Over all requirements, this gives a probability of at least $1-\sum_k 2^{-k-m+1}=1-2^{-m+2}$.   
\end{proof}

\begin{claim}
The probability that we hit a node of $B_{\DNC}$ during the algorithm is at most $2^{-m+1}$. 
\end{claim}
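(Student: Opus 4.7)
The plan is to carry out the random-walk calculation from Lemma~\ref{lem:small-vs-measure} along the actual sampling procedure used by the algorithm, observing that the ``effective bushiness'' $\tilde h(i)$ at each level $i$ (i.e., the size of the set from which $\sigma(i)$ is sampled uniformly) stays large throughout the construction.

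At level $i$ the pool size $\tilde h(i)$ equals $h(i)=g_i(i)$ in the default case (substeps~(a), (b.1), (b.2.i)), and equals $g_k(i)/2^{h(k-1)}$ during an execution of substep~(b.2.ii) initiated at some earlier stage $k\le i$. Using $h(k-1)=g_{k-1}(k-1)$ together with the inductive definition of the $g_k$'s, the latter quantity simplifies to $g_{k-1}(i)\cdot 2^{i+m}$ for $i\ge k$, which is at least $2^{i+m+1}$ since the formula for $g_0$ forces $g_{k-1}(i)\ge 2$ for all $i\ge k-1$. Hence $\tilde h(i)\ge 2^{i+m+1}$ for all $i\ge 1$, while $\tilde h(0)=h(0)=2^m$ (the first iteration of the loop is necessarily in the default case, as no requirement has yet accumulated an error).

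I would then rerun the argument inside the proof of Lemma~\ref{lem:small-vs-measure} one level at a time. After replacing $B_{\DNC}$ by its $2$-closure (which is still $2$-small above~$\lambda$, by Lemma~\ref{lem:small-set-closure}), $2$-closedness forces at most one immediate extension of the current $\sigma$ to lie in the closure; so the conditional probability of entering $B_{\DNC}$ at level $i+1$, given it has not yet been entered, is at most $1/\tilde h(i)$. Summing over levels, the probability of ever hitting $B_{\DNC}$ is bounded by
\[
\frac{1}{\tilde h(0)}+\sum_{i\ge 1}\frac{1}{\tilde h(i)}\ \le\ 2^{-m}+\sum_{i\ge 1}2^{-i-m-1}\ =\ 2^{-m}+2^{-m-1}\ \le\ 2^{-m+1}.
\]

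The only subtle point is that $\tilde h(i)$ itself is a random variable depending on the past history of the algorithm: whether we happen to be executing (b.2.ii) at level $i$, and for which $k$, is decided by earlier random choices. What saves the argument is that the lower bounds $\tilde h(i)\ge 2^{i+m+1}$ (for $i\ge 1$) and $\tilde h(0)=2^m$ hold \emph{deterministically} along every run of the algorithm. The tight calibration of the hierarchy $g_0\ll g_1\ll\dots$ put in place in the algorithm's design is precisely what guarantees this uniform lower bound, and makes the above union-type bound go through without any need to condition on the past.
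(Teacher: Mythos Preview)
Your argument is correct and follows essentially the same level-by-level union bound as the paper: at each level~$n$ the value $\sigma(n)$ is drawn uniformly from a pool of size $\tilde h(n)\ge 2^{n+m}$, and there is at most one ``bad'' value, so the probability of hitting $B_{\DNC}$ is bounded by $\sum_n 1/\tilde h(n)\le 2^{-m+1}$.

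The only difference is that you take a detour through Lemma~\ref{lem:small-vs-measure} and the $2$-closure of $B_{\DNC}$ to conclude that at most one child lies in the bad set, whereas the paper simply observes directly that the unique forbidden value at level~$n$ is $\phi^X_n(n)$ (if defined). Your route is more general---it would work for any $2$-small set---but for $B_{\DNC}$ specifically the paper's observation makes the closure machinery unnecessary. Your remark that the lower bound on $\tilde h(i)$ holds deterministically along every run (despite $\tilde h(i)$ being history-dependent) is a useful point that the paper leaves implicit.
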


\begin{proof}
There is at most one `bad' value of $\sigma(n)$ the algorithm can choose (namely, $\phi^X_n(n)$, if it is defined). Whenever a value $\sigma(n)$  is chosen at random for some~$n$, it is either chosen at random between~$0$ and $h(n)-1$ or, in case of a temporary restriction to a subtree, between~$0$ and $g_k(n)/2^{h(k-1)}-1$ for some~$k \leq n$ (in case of a temporary restriction of the tree). Both quantities are at least  $2^{n+m}$, by construction. This gives a total probability of at most $\sum_{n} 2^{-n-m} = 2^{-m+1}$ of hitting~$B_{\DNC}$. 
\end{proof}

The theorem immediately follows from the three claims: the probability that an infinite sequence~$A$ is returned and all requirement are satisfied is at least $1-2^{-m+1}-2^{-m+2}-2^{-m+1}=1-2^{-m+3} \geq 1-\varepsilon$.

\subsection{How much randomness do we need?}

It remains to conduct, like in Section~\ref{sec:fireworks}, an analysis of the level of algorithmic randomness needed to make the algorithm work. The attentive reader will notice that there are two uses of randomness in the construction: the first one to choose the sequence $(n_i)$ which will make the fireworks argument work (i.e., the algorithm won't get stuck), and the second one which helps choosing a node of the tree at random during the construction. For a given~$m$, consider the algorithm~$\Gamma_m$ with probability of success at least $1-2^{-m}$. There are three ways in which it can fail:
\begin{enumerate}
\item It could get stuck at some stage b.2.ii. 
\item It could hit a node which belongs to $B_{\DNC}$.
\item It could make at some stage~$k$ a true assumption that a set~$S$ is $g_k$-small, but nonetheless hit a node of~$S$ later on (when it hits a node of the set~$S$ corresponding to a wrong assumption, this is not a problem because the assumption will be discovered to be wrong later on and a new assumption will be made for the requirement). 
\end{enumerate}
 
Technically, the occurrence of the third case does not necessarily mean that the algorithm has failed, but if neither of these three cases occur the algorithm succeeds, as explained above. The total probability of such events is at most $2^{-m}$. Moreover, if any event of the above three types happens, it does so at some finite stage, thus after having used only finitely many bits of the random oracle. The open set of oracles that cause events of type 1 and 3 to happen can be effectively enumerated relatively to $\emptyset'$. Indeed, for the first type this is exactly what is explained in Section~\ref{sec:fireworks}, namely that $\emptyset'$ can check at any given given stage whether the algorithm is stuck at stage b.2.ii. The third type can also be checked using $\emptyset'$: indeed, the sets we assume to be $g_k$-small are c.e.\ sets and since~$g_k$ is computable uniformly in~$k$, the smallness can be checked using~$\emptyset'$ and checking whether a node chosen at some stage of the algorithm is in~$S$ can also be done effectively in $\emptyset'$ (since $S$ is c.e.). Thus we can design a $\emptyset'$-Martin-L\"of test $(\mathcal{U}^{\emptyset'}_m)_{m \in \N}$ such that $\mathcal{U}^{\emptyset'}_m$ covers the set of oracles which make the algorithm $\Gamma_m$ fail because of cases 1 or 3.

The second type of failure is even easier to analyse. The set $B_{\DNC}$ is $X$-c.e., so the set of oracles which cause the algorithm to hit a node of $B_{\DNC}$ is effectively open relative to~$X$. Thus we can design a $X$-Martin-L\"of test $(\mathcal{V}^{X}_m)_{m \in \N}$ such that $\mathcal{V}^{X}_m$ covers the set of oracles which make the algorithm $\Gamma_m$ fail because of case 2.

This finishes the proof of Theorem~\ref{thm:main-theorem}: if $Z$ is $X$-random and 2-random, for~$m$ large enough it will be outside $\mathcal{U}^{\emptyset'}_m$ and outside $\mathcal{V}^{X}_m$, hence the algorithm $\Gamma_m$ will succeed on input~$Z$.

\section{Further results}

The proof of Theorem~\ref{thm:main-theorem} can be adapted to prove more results on the class of DNC functions in the Levin-V'yugin algebra. For example, in this proof, we construct a real of $\DNC^X$ degree which computes no Martin-L\"of random real, but we do so using Kolmogorov complexity in a rather liberal way. By being very slightly more precise we can get a stronger result.

\begin{theorem} \label{thm:dnc-h0-vs-dnc-h1}
Let $X \in \cs$. For every fixed computable function~$h_0$, for every sufficiently fast-growing computable $h$, every real~$Z$ which is both $X$-Martin-L\"of random and 2-random computes a $\DNC_{h}^X$ function which computes no $\DNC_{h_0}$ function. 
\end{theorem}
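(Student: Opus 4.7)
The proof adapts that of Theorem~\ref{thm:main-theorem} by modifying only the requirements and the endgame of the active phase, while keeping the rest of the algorithm intact: the exactly $h$-bushy ambient tree $T$, the hierarchy $g_0\ll g_1\ll\cdots$, the random fireworks counters $n_i\in\{1,\dots,2^{i+m}\}$, the passive/active guessing, and the parallel diagonalization against $B_{\DNC^X}$ which guarantees $A\in\DNC_h^X$ with high probability all transfer unchanged. For each Turing functional $\Gamma$ we install the requirement
\[
\req{\Gamma}: \Gamma^A \text{ is partial, or } \Gamma^A \notin \DNC_{h_0}.
\]
The passive assumption for $\req{\Gamma}$ at a stage with $|\sigma|=k$ remains ``$S:=\{\tau\in T:|\Gamma^\tau|\geq h(k-1)\}$ is $g_k$-small above $\sigma$''; if eventually correct it forces $\Gamma^A$ to be partial with high probability, satisfying $\req{\Gamma}$.

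The active phase now waits for a different $\Sigma_1$ event. Let $\bad_{h_0}$ denote the c.e.\ set of finite strings that are \emph{not} initial segments of any $\DNC_{h_0}$-function (a string $\rho$ is enumerated into $\bad_{h_0}$ once one observes some $i<|\rho|$ with $\rho(i)\geq h_0(i)$ or $\rho(i)=\phi_i(i)$). We search for a finite $(g_k/2)$-bushy subtree $T'\subseteq T$ with stem $\sigma$ all of whose leaves $\tau$ satisfy $\Gamma^\tau\uh h(k-1)\in\bad_{h_0}$. This search is $\Sigma_1$ and fits the fireworks template; once $T'$ is found, restricting the walk to $T'$ immediately forces $\Gamma^A\uh h(k-1)\in\bad_{h_0}$ and so $\Gamma^A\notin\DNC_{h_0}$.

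The delicate step is ensuring that such a $T'$ exists whenever $S$ is $g_k$-big. Writing $E_\ell$ for the set of $\DNC_{h_0}$-initial-segments of length $\ell$, we have $|E_\ell|\leq\prod_{i<\ell}h_0(i)$. Split $S=S^{\mathrm{good}}\sqcup S^{\mathrm{bad}}$ with $S^{\mathrm{good}}=\bigcup_{\rho\in E_{h(k-1)}}S_\rho$. If $S^{\mathrm{bad}}$ is $(g_k/2)$-big, the tree $T'$ exists directly by the concatenation property (Lemma~\ref{lem:concatenation-prop}). Otherwise, by smallness additivity, $S^{\mathrm{good}}$ is $(g_k/2)$-big and some $\rho^*\in E_{h(k-1)}$ has $S_{\rho^*}$ being $(g_k/(2|E_{h(k-1)}|))$-big. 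Applying the same Kolmogorov estimate used in Theorem~\ref{thm:main-theorem}---which bounds $K(\rho^*)\leq 2\sum_{i<k}\log h(i)+K(\Gamma)+c$ for any $\rho^*$ produced by the effective search---but comparing it against $\log|E_{h(k-1)}|=\sum_{i<h(k-1)}\log h_0(i)+O(1)$ rather than against $|\rho^*|$, a standard pigeonhole over the at-most $2^{K(\rho^*)}$ strings of Kolmogorov complexity $\leq K(\rho^*)$ shows that, for $h$ fast-growing enough, at least one candidate produced by the search falls outside $E_{h(k-1)}$ and is thus enumerated into $\bad_{h_0}$ in finite time, delivering the desired $T'$.

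The verification---three failure modes (stuck at substage (b.2.ii), hit of $B_{\DNC^X}$, hit of a falsely-$g_k$-small set), each with probability $O(2^{-m})$; the $\emptyset'$-Martin--L\"of test covering types~(1) and~(3) and the $X$-Martin--L\"of test covering type~(2)---is then identical to Section~4.3, and yields the theorem for $Z$ simultaneously $X$-ML-random and $2$-random. The main obstacle is calibrating $h$ so that the sharpened inequality holds uniformly in $\Gamma$; this is possible because $h_0$ is a fixed computable function and the only $\Gamma$-dependence is the additive constant $K(\Gamma)$, which is absorbed by the late-start convention on requirements already used in the proof of Theorem~\ref{thm:main-theorem}.
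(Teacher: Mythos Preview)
Your approach diverges from the paper's in a substantive way, and the divergence creates a real gap. The paper does \emph{not} diagonalize against $\DNC_{h_0}$ directly. Instead it invokes the Kjos-Hanssen--Merkle--Stephan correspondence between bounded DNC degrees and complex degrees to reduce the problem to showing that $A$ computes no $h_0$-complex real. The requirements become
\[
\req{\Gamma,d}:\ \Gamma^A\text{ is partial, or } (\exists n)\ K(\Gamma^A\uh h_0(n))<n-d,
\]
the set $S$ in step~(a) now asks for $|\Gamma^\tau|\geq h_0(h(k-1))$, and the hierarchy $g_k$ is redefined so that the extra factor $2^{h_0(h(k-1))}$ is absorbed. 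With these changes the Kolmogorov estimate~\eqref{eq:basic-kolmo-bound} is compared against $|\rho^*|=h_0(h(k-1))$ and the verification of Theorem~\ref{thm:main-theorem} carries over verbatim.

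Your direct route---force an initial segment of $\Gamma^A$ into $\bad_{h_0}$---is not established by the argument you give. In the ``otherwise'' case you assume $S^{\mathrm{bad}}$ is $(g_k/2)$-small and locate some $\rho^*\in E_{h(k-1)}$ with $S_{\rho^*}$ big, then claim that a ``pigeonhole over the at most $2^{K(\rho^*)}$ strings of complexity $\leq K(\rho^*)$'' forces a candidate outside $E_{h(k-1)}$. But the inequalities go the wrong way (there are \emph{few} low-complexity strings and \emph{many} elements of $E_{h(k-1)}$), and nothing in your argument rules out that the specific $\rho^*$ found is a bona fide $\DNC_{h_0}$-initial segment---such strings of low complexity do exist. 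Even granting some $\rho^*\in\bad_{h_0}$ with $S_{\rho^*}$ big, the bushiness you obtain is only $g_k/(2|E_{h(k-1)}|)$, not $g_k/2$, so the search you specified still need not terminate. A correct direct argument would require (i) weakening the active-phase bushiness target to $g_k/\prod_{i<h(k-1)}h_0(i)$ and rebuilding the $g_k$'s around this, and (ii) replacing the pigeonhole step by a recursion-theorem argument: from the effective description of the first $\rho^*$ found one gets an index $e\lesssim K(\sigma,\Gamma,k)$ with $\phi_e(e)=\rho^*(e)$, whence $\rho^*\in\bad_{h_0}$ once $e<h(k-1)$. This can be made to work, but it is not what you wrote, and it is more delicate than the paper's detour through complex reals.
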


This is a stronger theorem than Theorem~\ref{thm:main-theorem} because, as we saw in Section 1.1, when $h_0$ is sufficiently fast-growing, every Martin-L\"of random real computes a $\DNC_{h_0}$ function. Again, the fact that $\dnc_{h_0}$ is strictly contained in $\dnc_{h_1}$ when $h_1$ grows sufficiently faster than~$h_0$ is known (see~\cite{mushfeq}), but our theorem shows that this separation holds in the Levin-V'yugin algebra as well. \\

The first thing to do to adapt our previous proof is to use the relationship between Kolmogorov complexity and DNC functions discovered by Kjos-Hanssen et al.~\cite{Kjos-HanssenMS2011}. For a function~$h: \N \rightarrow \N$, call \emph{$h$-complex} a real~$A \in \cs$ such that $K(X \uh h(n)) \geq n-O(1)$. Call a real \emph{complex} if it is $h$-complex for some computable~$h$.  Kjos-Hanssen et al.\ proved that a real computes a $\DNC$ function with computable bound if and only if it computes a complex real. More precisely: for any computable~$h_0$, for any computable~$h_1$ which grows sufficiently faster than $h_0$, if a real~$A$ computes a $\DNC_{h_0}$ function, it computes an $h_1$-complex real, and if $A$ computes an $h_0$-complex real, it computes a $\DNC_{h_1}$ function.

\begin{proof}[Proof of Theorem~\ref{thm:dnc-h0-vs-dnc-h1}]
By the correspondence between DNC functions and complex reals, it suffices to show the following: For every fixed computable function~$h_0$, for every sufficiently fast-growing $h$, every real~$Z$ which is both $X$-Martin-L\"of random and 2-random computes a $\DNC_{h}^X$ function which computes no $h_0$-complex function. We modify the proof of Theorem~\ref{thm:main-theorem} as follows. The requirements now become:
\begin{center}
$\req{\Gamma,d}$: either $\Gamma^A$ is partial or there is an~$n$ such that $K(\Gamma^A \uh h_0(n)) < n-d$
\end{center}

The new functions $g_i$'s are defined by 
\[
g_k(i) = \left\{  \begin{array}{l} 1~ \text{ if } ~  i<k \\  g_{k-1}(i) \cdot 2^{h_0(g_{k-1}(k-1))+i+m} ~ \text{otherwise} \end{array}  \right.
\]
and again, $h(k)=g_k(k)$ for all~$k$. The sets~$S$ considered in Step (a) of the algorithm are now 
\[
S = \set{\tau \in T : |\Gamma^\tau|  \geq h_0(h(k-1))}
\]

The rest of the construction remains the same. The estimate~\eqref{eq:basic-kolmo-bound} is left unchanged by this modification, but we now have $|\rho^*| \geq h_0(h(k-1))$. Together with~\eqref{eq:basic-kolmo-bound}, for~$k$ large enough, this guarantees the satisfaction of $\req{\Gamma,d}$ (where $\Gamma$ is the reduction with respect to which~$\rho^*$ is defined). 
\end{proof}

Using another adaptation of the proof of Theorem~\ref{thm:main-theorem}, we can also transfer to the Levin-V'yugin algebra the following result, due to Miller (see \cite[section 3]{mushfeq}): there exists a $\DNC$ function which computes no complex real. Namely, the following holds.

\begin{theorem} \label{thm:dnc-vs-complex}
Let $X \in \cs$. Every real~$Z$ which is both $X$-Martin-L\"of random and 2-random computes a $\DNC^X$ function which computes no $\DNC_{h}$ function for any computable~$h$. 
\end{theorem}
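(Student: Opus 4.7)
The plan is to adapt the proof of Theorem~\ref{thm:dnc-h0-vs-dnc-h1}, this time diagonalizing simultaneously against all computable bounds. By the Kjos-Hanssen et al.\ characterization recalled just before Theorem~\ref{thm:dnc-h0-vs-dnc-h1}, a real~$A$ computes a $\DNC_h$ function for some computable~$h$ if and only if $A$ computes a complex real. Hence ensuring that $A$ computes no such $\DNC_h$ is equivalent to meeting, for every Turing functional~$\Gamma$, every index~$e$ of a partial computable function~$\phi_e$, and every $d \in \N$, the requirement
\[
\req{\Gamma,e,d}:\ \Gamma^A\ \text{is partial},\ \phi_e\ \text{is not total},\ \text{or}\ \exists n\ K(\Gamma^A \uh \phi_e(n)) < n - d.
\]

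The probabilistic algorithm is essentially the one of Theorem~\ref{thm:dnc-h0-vs-dnc-h1}, modified so that the functions $g_k$ and hence $h(k)=g_k(k)$ are now defined in terms of the specific $h_0 = \phi_{e_{i_k}}$ of the requirement $\req{i_k}$ handled at stage~$k$: for $i \geq k$, one still sets
\[
g_k(i) = g_{k-1}(i)\cdot 2^{\phi_{e_{i_k}}(g_{k-1}(k-1)) + i + m}.
\]
To keep $h$ a total computable function even though some $\phi_e$'s may be partial, requirements are scheduled by dovetailing: at stage~$k$, iterate through the pending indices $i \leq k$ in priority order and pick the first one such that $\phi_{e_i}(g_{k-1}(k-1))$ converges within $k$ computation steps; if none does, take a ``null'' stage that just picks $\sigma(k)$ uniformly in $\{0,\dots,h(k)-1\}$ with $h(k) = 2h(k-1)$. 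Every requirement $\req{\Gamma,e,d}$ with $\phi_e$ total is eventually selected — for large enough~$k$, $\phi_e(h(k-1))$ converges within the budget~$k$, and by induction on priority every higher-priority total-$\phi$ requirement has been handled — while any requirement whose $\phi_e$ is partial on the arguments we need is vacuously satisfied, regardless of whether it is ever handled.

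The Kolmogorov bookkeeping is unchanged from Theorem~\ref{thm:dnc-h0-vs-dnc-h1}: at the good stage~$k$ for a handled requirement $\req{\Gamma,e,d}$, the string $\rho^*$ produced in step~(b.2.ii) satisfies $|\rho^*| \geq \phi_e(h(k-1))$ and
\[
K(\rho^*) \leq 2\sum_{j<k}\log h(j) + K(\phi_e) + K(\Gamma) + O(1).
\]
Adopting the same ``late enough'' rule as in the previous proofs — allowing $\req{\Gamma,e,d}$ to receive attention only at stages~$k$ for which the right-hand side above is smaller than $h(k-1) - d$ — we obtain $K(\Gamma^A \uh \phi_e(h(k-1))) < h(k-1) - d$, which satisfies the requirement with $n = h(k-1)$. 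The probability bounds for the three failure modes (fireworks stuck at~(b.2.ii), hitting a node of $B_{\DNC^X}$, hitting a set correctly deemed $g_k$-small) and the randomness analysis (the first and third failure events form $\Sigma^0_1(\emptyset')$ classes, the second a $\Sigma^0_1(X)$ class) go through verbatim as in Theorem~\ref{thm:main-theorem}, so that $Z$ being both $X$-random and 2-random still suffices.

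The main obstacle is the uniform control over $h$ as $h_0$ varies across requirements: at each stage we must effectively commit to an $h(k)$ whose growth absorbs a $2^{\phi_{e_{i_k}}(h(k-1))}$ factor for whichever $\phi_{e_{i_k}}$ is chosen, and we must never be blocked waiting on a partial $\phi_e$. The dovetailing above settles this, but one should verify carefully that every total $\phi_e$ is indeed eventually picked up by the scheduler and that the Kolmogorov bound accommodates the varying $K(\phi_e)$ without forcing excessive delays.
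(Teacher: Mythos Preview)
Your proposal has a genuine gap, and the paper takes a substantially different route.

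The claim ``for large enough~$k$, $\phi_e(h(k-1))$ converges within the budget~$k$'' is false. Your~$h$ is built so that $h(k)$ already dominates $2^{\phi_{e_{i_k}}(h(k-1))}$; in particular $h(k-1)$ grows at least tower-exponentially in~$k$. For any total computable~$\phi_e$, the running time of $\phi_e$ on input $h(k-1)$ can therefore exceed~$k$ for every~$k$ (indeed for \emph{any} fixed computable budget function of~$k$). A concrete obstruction: if every stage so far has been null, $h(k)=2h(k-1)$, yet a total $\phi_e$ whose convergence time on input~$n$ is $2^{2^n}$ will never be picked, so all stages remain null forever. More generally, once a single fast $\phi_{e_j}$ has been scheduled, $h$ jumps so violently that the $k$-step budget can never catch the running time of $\phi_{e_i}(h(k-1))$ for some other total $\phi_{e_i}$. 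Your ``induction on priority'' does not help: whether a higher-priority requirement has been ``handled'' is either undecidable (if it means a correct $\Pi_1$ assumption has been made) or incompatible with the fireworks revisit structure (if it means ``received attention once'').

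The paper's proof avoids this entirely by dropping the attempt to keep a single computable~$h$. It works in the full tree $\omega^{<\omega}$ and, at each stage, chooses the range for $\sigma(k)$ \emph{dynamically} from the current list~$\mathcal{L}$ of active smallness assumptions, via $G(u)=2^{u+m}\sum_j g_j(u)$; thus the bound on~$A$ depends on the random oracle and is not a fixed computable function. Correspondingly, the argument fed to~$\phi$ is not $h(k-1)$ but~$r+d$, where~$r$ is a computable upper bound on $K(\sigma,\Gamma,\phi,d,\mathcal{L})$; this~$r$ grows slowly with~$k$, so the convergence question is well-behaved. To cope with partiality of~$\phi$, the paper introduces a genuinely new nested guess structure with two layers of passive assumptions (C1: ``$\phi(r+d)\uparrow$''; C2: ``$S$ is small'') and two independent randomly chosen caps, before the active C3 guess. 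This nesting, and the resulting two distinct ways the algorithm can get stuck, is the main new idea beyond Theorems~\ref{thm:main-theorem} and~\ref{thm:dnc-h0-vs-dnc-h1}; your proposal does not contain it.
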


Although the general structure is similar, this second adaptation is not straightforward and quite a number of important changes are needed. The first thing to notice is that there is obviously no hope to conduct the whole construction in an $h$-bushy tree for a computable~$h$ since we want~$A$ to compute no complex real, which is equivalent to computing no computably bounded $\DNC$ function. Thus we will need to work in the full tree $\omega^\omega$ and at each level~$k$, choose dynamically the interval $[0,h(k)]$ for the random choice of~$\sigma(k)$.

For this construction, the requirements are of the form:

\begin{center}
$\req{\Gamma,\phi,d}$: $\Gamma^A$ is partial, or $\phi$ is partial, or there is an~$n$ such that $K(\Gamma^A \uh \phi(n)) \leq n- d$
\end{center}

where the $\Gamma$'s are still Turing functionals from $\omega^\omega$ to $\cs$, and the $\phi$'s are partial computable functions from $\N$ to $\N$. 
How can we satisfy a single requirement $\req{\Gamma,\phi,d}$? Again, suppose some string~$\sigma$ of length~$k$ has already been built, and consider the set
\[
S = \set{\tau \in 2^{<\omega} : |\Gamma^\tau|  \geq \phi(r+d)}
\]
where~$r$ is an upper bound for the Kolmogorov complexity of $(\Gamma,d,\phi,\sigma)$ (which can be found computably since there are computable upper bounds of prefix-free Kolmogorov complexity). By convention, this set is empty if~$\phi(r+d)$ is undefined. Again, let us analyze the different cases and how to succeed in each of them. 

\begin{itemize}
\item Case 1: $\phi(r+d)$ is undefined. Then the requirement is satisfied vacuously.

\item Case 2: $\phi(r+d)$ is defined and $S$ is $\left(2^{\phi(r+d)} \cdot g_0\right)$-small above~$\sigma$, where $g_0$ is the function defined in previous constructions. In this case, it suffices to choose a function $g_1 \gg 2^{\phi(r+d)} \cdot g_0$ and for all $k' \geq k$ pick the value of $\sigma(k')$ at random between $0$ and $g_1(k')$. By smallness of $S$, using Lemma~\ref{lem:small-vs-measure} as usual, we will avoid the set~$S$ with high probability, thus satisfying requirement~$\req{\Gamma,\phi,d}$.

\item Case 3: $\phi(r+d)$ is defined and $S$ is $\left(2^{\phi(r+d)} \cdot g_0\right)$-big above~$\sigma$. Each element~$\tau \in S$ is such that $|\Gamma^\tau|  \geq \phi(r+d)$, therefore we can decompose~$S$ as
\[
S = \bigcup_{|\rho|= \phi(r+d)} S_{\rho}~  ~ \text{where} ~ ~ S_\rho = \{\tau \in S \mid \Gamma^\tau \ext \rho\}
\]
and since there are $2^{\phi(r+d)}$ strings of length~$\phi(r+d)$, there must be a $\rho^*$ such that $S_{\rho^*}$ is $g_0$-big above~$\sigma$ and such a~$\rho^*$ can be found effectively knowing~$(\Gamma,d,\phi,\sigma)$, hence by the choice of~$r$, 
\begin{equation*}
K(\rho^*) \leq r 
\end{equation*}
We have $|\rho^*|=\phi(r+d)$, so for~$m$ large enough, this guarantees $K(\rho^* \uh \phi(r+d)) \leq r$, thus satisfying requirement $\req{\Gamma,\phi,d}$ with $n=r+d$.

\end{itemize}

We want to use a fireworks argument to help us choose between these three cases, but some care is needed since we no longer have a $\Sigma_1/\Pi_1$ dichotomy. Indeed Case 2 is neither $\Sigma_1$ nor~$\Pi_1$. The solution is to introduce a priority ordering over passive guesses. We will first make a number of assumptions that $\phi(r+d)$ is undefined at different stages. If all of these assumptions turn out to be wrong (= the error counter reaches its cap), we will then make the assumption that $\phi(r+d)$ is defined at the current stage, wait for the value $\phi(r+d)$ to be defined, and only then make \emph{one} assumption that the current~$S$ is $\left(2^{\phi(r+d)} \cdot g_0\right)$-small above the current~$\sigma$. If proven wrong, we will begin another round of assumptions that $\phi(r+d)$ is undefined (using a new error counter with a new cap) before making a new `Case 2' assumption. Finally, when many of these `Case 2' assumptions are proven to be wrong, we make one last assumption, a `Case 3' assumption, and if everything goes well we will satisfy the requirement $\req{\Gamma,\phi,d}$. Thus, for any given requirement, our sequence of assumptions will look like this:
\begin{center}
C1, C1, ..., C1, C2, C1, ..., C1, C2, C1, ..., C1, C2, ..........., C2, C1, C1, ..., C1, C3 
\end{center}
(where C$i$=Case $i$) unless one of the C1/C2 assumptions is never proven to be wrong, in which case we succeed. This time there are two possible ways for the algorithm to get stuck: either wrongly assume in Case 2 that $\phi(r+d)$ is defined, and then wait forever for it to converge, or like in the previous proofs, get stuck because of a wrong C3 assumption, waiting in vain to find a big subtree with leaves in a given c.e.\ set. The probability of either of these events happening can be made arbitrarily small by a fireworks argument. \\

The idea to handle several requirements at the same time is similar to what was done in our previous constructions, but this time is dynamic: Before making a C2/C3 assumption of type `the following set~$S$ is small/big', we need to dynamically decide what `big/small' should mean. What we do is first look at what other smallness assumptions are currently being made for other requirements. If there are $l$ current assumptions of type `$S_i$ is $g_i$-small' for $i \leq l$, then we first choose a function $G$ much larger than $g_1+....+g_{l}$ and evaluate the smallness of~$S$ in terms of this new function. In case $S$ is then assumed to be small it is just added to the list of current assumptions. In case it is correctly assumed to be big, since $G$ is much larger than the other $g_i$'s, the probability that we hit one of the $S_i$ during the temporary restriction of the tree will be, as in the previous proofs, close to~$0$. \\

While we hope that the reader is already convinced at this point, we provide the formal details for completeness. 

\begin{theorem}
Let $X \in \cs$ and $h$ be a sufficiently fast-growing computable function. For every rational $\varepsilon>0$, one can effectively design a probabilistic algorithm which, with probability at least $1-\varepsilon$, produces an $A \in \omega^\omega$ such that (1) $A$ is $\DNC^X$ and (2) $A$ computes no complex real. 
\end{theorem}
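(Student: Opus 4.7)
My plan is to combine the layered fireworks machinery with a runtime-chosen branching bound, abandoning the single ambient tree used in Theorem~\ref{thm:main-theorem}. Because the target $A$ must be $\DNC^X$ while computing no complex real, the bound on $\sigma(k)$ cannot be any fixed computable $h(k)$---otherwise $A$ would itself be in $\DNC_{h}$, hence complex. Thus I pick the branching bound dynamically: at stage $k$ I choose a value $h_k$ depending on which smallness assumptions are currently alive. The requirements to meet are
\[
\req{\Gamma,\phi,d}: \ \Gamma^A \text{ partial, or } \phi \text{ partial, or } K(\Gamma^A \uh \phi(n)) \leq n - d \text{ for some } n.
\]

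Given $\req{\Gamma,\phi,d}$ receiving attention at stage $k$ with current initial segment $\sigma$, let $r$ be a computable upper bound on $K(\Gamma,d,\phi,\sigma)$ and consider $S = \{\tau : |\Gamma^\tau| \geq \phi(r+d)\}$. Three scenarios arise. (C1) $\phi(r+d)\uparrow$: the requirement is vacuously met. (C2) $\phi(r+d)\downarrow$ and $S$ is small above $\sigma$: a random walk of sufficient branching misses $S$ with high probability, keeping $\Gamma^A$ finite. (C3) $\phi(r+d)\downarrow$ and $S$ is big above $\sigma$: pigeonholing over the $2^{\phi(r+d)}$ prefixes of that length yields some $\rho^{\ast}$ with $S_{\rho^\ast} := \{\tau \in S : \Gamma^\tau \succeq \rho^\ast\}$ still big, and $\rho^\ast$ is found effectively from $(\Gamma,d,\phi,\sigma)$, so $K(\rho^\ast) \leq r \leq |\rho^\ast| - d$, satisfying the requirement after a restricted random walk to a leaf of the witness sub-tree.

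Because (C2) is neither $\Sigma_1$ nor $\Pi_1$, the fireworks must be nested. For each requirement I draw a family of random caps from exponentially growing intervals and issue guesses in the pattern
\[
C1, C1, \ldots, C1, C2, C1, \ldots, C1, C2, \ldots, C2, C1, \ldots, C1, C3,
\]
where a first cap bounds how many C1-refutations are allowed before switching to a C2 (wait for $\phi(r+d)\downarrow$, then assume the current $S$ small), a second cap bounds the total number of C2-refutations allowed before finally committing to a C3 (wait for a big sub-tree witness), and a fresh round of C1 caps is drawn between consecutive C2 guesses. For each fixing of all but one of these caps, at most one value of the remaining cap leads to a wrong committed C2 or C3, so the fireworks-stuck probability summed over all requirements can be forced below $\varepsilon/3$.

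Before installing a new C2 or C3 assumption I inspect the currently live smallness witnesses $(S_j, g_j)$ together with the baseline $g_0$ guarding $B_{\DNC}$, then choose a fresh $G$ dominating $g_0 + \sum_j g_j$ by an exponential factor and formulate the new assumption relative to $G$ (for C3, hunt for a $(G/2^{\phi(r+d)})$-bushy witness sub-tree with leaves in $S_{\rho^\ast}$). Simultaneously I raise the future branching bounds $h_{k'}$ for $k' \geq k$ so that Lemma~\ref{lem:small-vs-measure} bounds the conditional hit-probability of any single live $S_j$ or of $B_{\DNC}$ in the rest of the construction by $\varepsilon \cdot 2^{-k}/6$. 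The main obstacle is precisely this dynamic book-keeping: I must verify that each fresh $G$ and $h_k$ can be chosen computably from the finitely many live assumptions plus the runtime value $\phi(r+d)$, and that the resulting infinite telescoping product of random-walk survival probabilities stays above $1 - \varepsilon/3$ despite arbitrarily many upgrades. Once that is in place, the verification splits into three failure modes---fireworks stuckness, hitting $B_{\DNC}$, and hitting a correctly-assumed-small $S$---the first and third feeding a $\emptyset'$-Martin-L\"of test and the second an $X$-Martin-L\"of test, exactly as in Theorem~\ref{thm:main-theorem}. Both are passed by any $Z$ that is simultaneously $X$-random and $2$-random.
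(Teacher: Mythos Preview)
Your outline matches the paper's approach almost exactly: the same three-case analysis (C1/C2/C3), the same nested fireworks pattern cycling through C1 runs before each C2 guess and through C2 guesses before the single C3 commitment, the same dynamic choice of a branching function dominating all currently live smallness witnesses, and the same split of the failure set into a $\emptyset'$-test and an $X$-test.

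There is, however, one genuine gap. You set $r$ to be a computable upper bound on $K(\Gamma,d,\phi,\sigma)$ and then claim that in case C3 the string $\rho^\ast$ ``is found effectively from $(\Gamma,d,\phi,\sigma)$, so $K(\rho^\ast)\le r$''. This does not go through: the search for $\rho^\ast$ is a search for a prefix $\rho$ such that $S_\rho$ is big above $\sigma$ \emph{with respect to the current threshold $G$}, and $G$ is determined by the list of currently live smallness assumptions. That list is not a function of $\sigma$ alone; it also depends on the random fireworks caps chosen at initialization (different caps cause different assumptions to be installed and discarded at the same $\sigma$). Hence $\rho^\ast$ cannot be recovered from $(\Gamma,d,\phi,\sigma)$, and the inequality $K(\rho^\ast)\le r$ fails as stated. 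The paper fixes this by letting $r$ bound the complexity of the larger tuple $(\sigma,\Gamma,\phi,d,\mathcal{L})$, where $\mathcal{L}$ is the current list of live assumptions; since $G$ is computable from $\mathcal{L}$, the search for $\rho^\ast$ is then effective in this data and the bound $K(\rho^\ast)\le r$ is legitimate. A related point: because the restricted walk in C3 lives in a $(G/2^{\phi(r+d)})$-bushy tree in your parameterization, you must make sure your ``exponential factor'' in the choice of $G$ already absorbs $2^{\phi(r+d)}$, or equivalently (as the paper does) phrase the C2 smallness assumption as $(2^{\phi(r+d)}\cdot G)$-small so that the C3 walk is $G$-bushy and automatically dominates every $g_j$ on the list.
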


Again, let us take $m$ to be the smallest integer such that $2^{-m+3} < \varepsilon$. Our algorithm is the following. 

\noindent \textbf{Stage 0: Initialization}. 
First, for each requirement $\req{i}$, pick a number $n(i, 1)$ at random between $1$ and $2^{\langle i, 1, 0 \rangle +m}$.
The number $n(i,1)$ is meant to be a cap for the number of wrong C2 assumptions for requirement~$\req{i}$.
Moreover, for each integer $b$ between $0$ and $n(i,1)$, pick a number $n(i, 2, b)$ at random between  $1$
and $2^{\langle i, 2, b \rangle +m}$. Each time C2 makes a wrong assumption, C1 starts a new series of assumptions
with $n(i, 2, b)$ as a new cap, where $b$ is the number of wrong C2 assumptions.
Create two counters $c_i,c'_i$, initialized at~$0$ ($c_i$ counts the number of wrong C2 assumptions for requirement $\req{i}$ and $c'_i$ counts the number of wrong C1 assumption during the current run of such assumptions for requirement~$\req{i}$). Let $\mathcal{L}$ be a list of assumptions (coded as integers, with at most one assumption per requirement), initially empty. Finally, initialize $\sigma$ to be the empty string. 


\noindent \textbf{Loop (to be repeated indefinitely)}. Suppose that the values $\sigma(0), ..., \sigma(k-1)$ of $\sigma$ are already defined and some requirement $\req{i}=\req{\Gamma,\phi,d}$ receives attention.  Let~$r$ be an upper bound of the Kolmogorov complexity of the current tuple~$(\sigma,\Gamma,\phi,d,\mathcal{L})$. Let $g_0, ..., g_l$ be the computable functions such that an assumption `$S$ is $g_i$-small' is currently in~$\mathcal{L}$. We (locally) define a function $G$ by $G(u)=2^{u+m} (g_0(u)+\ldots+g_l(u))$.

\begin{itemize}
\item[(a)] If this requirement receives attention for the first time, we make for this requirement the assumption that $\phi(r+d)$ is undefined, and add this assumption to $\mathcal{L}$.

\item[(b)] If it does not receive attention for the first time, we check whether the current assumption made for this requirement still appears to be true at stage~$k$.

	\begin{itemize}
		\item[(1)] If it does, we maintain this assumption and simply pick the value of~$\sigma(k)$ at random between $0$ and $G(k)$.
		\item[(2)] If the assumption is discovered to be false, and it was a C1 assumption ($\phi$~undefined on some value), we remove this assumption from~$\mathcal{L}$ and increase $c'_i$ by~$1$. 
		
			\begin{itemize}
				\item[(i)] If the new value of $c'_i$ remains less than $n(i,2,c_i)$, we make a new assumption: we now assume that $\phi(r+d)$ is undefined and add this assumption to $\mathcal{L}$. 
				\item[(ii)] If the new value of $c'_i$ reaches $n(i,2,c_i)$, we wait for $\phi(r+d)$ to become defined. We then make the assumption that the set $S = \{\tau \in 2^{<\omega} \mid |\Gamma^\tau|\geq \phi(r+d)\}$ is $(2^{\phi(r+d)} \cdot G)$-small above~$\sigma$ and add this assumption to~$\mathcal{L}$. 
			\end{itemize}

		\item[(3)] If the assumption is discovered to be false, and it was a C2 assumption (smallness of some set~$S$), we remove this assumption from~$\mathcal{L}$, and increase $c_i$ by~$1$.

			\begin{itemize}
				\item[(i)] If the new value of $c_i$ remains less than $n(i,1)$, we make a new assumption: we now assume that $\phi(r+d)$ is undefined, add this assumption to $\mathcal{L}$, and reset~$c'_i$ to~$0$.  
				\item[(ii)] If the new value of $c_i$ is equal to $n(i,1)$, we then wait until we find, for some~$\rho$ of length~$\phi(r+d)$, a set $S_\rho = \{\tau \in S \mid \Gamma^\tau \ext \rho\}$  which is $G$-big above~$\sigma$. When this happens, i.e., when we find a finite tree~$T$ of stem~$\sigma$ which is $G$-bushy above~$\sigma$, we choose the next values of~$\sigma$ by a downward random walk restricted to~$T$ until we reach a leaf of~$T$. When a leaf is reached, we mark the $i$-th requirement as satisfied. 
			\end{itemize} 
	\end{itemize}

\end{itemize}

The sequence~$A$ returned by the algorithm is the minimal element of $\omega^{\leq \omega}$ extending all the values taken by $\sigma$ throughout the algorithm. 

The verification is similar to the previous proofs.

\begin{claim}
The probability that the algorithm gets stuck at some substage of type (b.2.ii) or (b.3.ii) is at most $2^{-m+1}$. 
\end{claim}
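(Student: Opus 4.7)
The plan is to carry out the standard fireworks calculation, but iterated to account for the two layers of caps in the algorithm. Fix a requirement $\req{i}=\req{\Gamma,\phi,d}$; for this requirement, the stuck events at (b.2.ii) and (b.3.ii) are governed by two families of random variables, namely $n(i,1)$ (the C3 cap) and $n(i,2,b)$ (the round-$b$ C2 cap for each $b \geq 0$). I will argue that each of these, with all other random choices frozen, admits at most one bad value leading to a stuck event; a union bound will then give the desired $2^{-m+1}$ estimate, using that $\langle\cdot,\cdot,\cdot\rangle$ enumerates $\N$ bijectively.

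The $n(i,2,b)$ analysis is a direct extension of the single-layer fireworks argument used in the proof of Theorem~\ref{thm:main-theorem}. Freezing all other randomness, the central identity is that the value of $r$ computed at (b.2.ii) when cap $v=n(i,2,b)$ is chosen coincides with the $r$ that would be attached to the $(v+1)$-th C1 assumption in round $b$ when cap $v+1$ is chosen; both are recomputed from the same state $(\sigma,\mathcal{L},\ldots)$ immediately after the $v$-th C1 has been removed from $\mathcal{L}$. Hence if cap $v$ leaves the algorithm waiting in vain for $\phi(r+d)$ to converge, then cap $v+1$'s extra C1 assumption is precisely ``$\phi(r+d)$ undefined'', a correct $\Pi_1$ guess that is never disproved, so $\req{i}$ is satisfied via partiality of $\phi$. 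Conversely, cap $v-1$ enters (b.2.ii) with $r$ equal to the one attached to the $v$-th C1 in the cap-$v$ execution, and $\phi$ of that value is known to converge (this is exactly what disproved the $v$-th C1). So at most one value of $v$ is bad, contributing at most $2^{-\langle i,2,b\rangle-m}$ to the probability of getting stuck.

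The $n(i,1)$ analysis follows the same pattern at the outer layer: freezing all other randomness, if cap $u=n(i,1)$ leaves the algorithm stuck at (b.3.ii), i.e.\ if no bigness witness of the required form exists for the current $S$, then cap $u-1$ would enter (b.3.ii) one round earlier, in a state where the C2 just disproved in round $u-1$ furnishes such a bigness witness, while cap $u+1$ would instead start an additional round whose completion (via a correct C1, a correct C2, or an eventual bigness witness disproving a new C2) guarantees non-stuck behaviour. Once again at most one bad value of $u$ survives, contributing at most $2^{-\langle i,1,0\rangle-m}$.

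Summing over all requirements and all rounds, the total probability is bounded by $\sum_i 2^{-\langle i,1,0\rangle-m} + \sum_{i,b} 2^{-\langle i,2,b\rangle-m} \leq 2^{-m}\sum_{n\in\N} 2^{-n} \leq 2^{-m+1}$. The main technical obstacle is to verify rigorously the coincidences of $r$-values and the propagation of bigness witnesses across cap choices, particularly at the $n(i,1)$ layer where the sets $S$ appearing at (b.3.ii) depend on a state that has evolved differently under different caps; this requires careful bookkeeping of the precise content of the state at each transition point, but is ultimately enabled by the observation that varying a single fireworks variable does not perturb the state until that variable is actually consumed, after which the two executions can be compared step by step.
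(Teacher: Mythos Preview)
Your proof takes essentially the same approach as the paper's: for each random cap $n(i,2,b)$ and $n(i,1)$, argue (with all other randomness frozen) that at most one value leads to the algorithm getting stuck at the corresponding substage, then take a union bound using that $\langle\cdot,\cdot,\cdot\rangle$ is a bijection onto~$\N$. The paper's own proof is a single sentence invoking ``the standard verification of fireworks arguments'' and the same arithmetic; you have simply (and helpfully) unpacked the two-layer comparison of adjacent cap values that the paper leaves implicit.
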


\begin{proof}
Indeed, by the standard verification of fireworks arguments, the probability that the algorithm gets stuck at the end of the $b$-th run of C1 assumptions for requirement~$i$ (making a bad assumption that $\phi$ is defined on some value) is, by construction, $2^{-\langle i,2,b \rangle -m}$. Likewise, the probability that it gets stuck because of a bad C3 assumption for requirement~$i$ is $2^{-\langle i,1, 0 \rangle -m}$. Since the sum of the terms $2^{-\langle i,a,b \rangle -m}$ is $2^{-m+1}$, we have the desired result. 
\end{proof}

\begin{claim}
Conditionally to algorithm returning an infinite sequence, the probability that all requirements are met during the construction is at least $1-2^{-m+2}$.
\end{claim}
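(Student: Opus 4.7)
The plan is to mirror the analogous proof for Theorem~\ref{thm:main-theorem}: fix a single requirement $\req{i} = \req{\Gamma, \phi, d}$, identify the mutually exclusive modes in which its handling can terminate (under the conditioning), bound its individual failure probability, and then sum over all requirements using distinct ``good stages''. Conditionally on the algorithm producing an infinite sequence, the chain of C1/C2 assumptions for $\req{i}$ must stabilize at some stage $k_0$ in one of three modes: (i) a correct C1 guess ``$\phi(r+d)$ is undefined'' is never refuted, so $\phi$ is partial at $r+d$ and $\req{i}$ holds vacuously; (ii) a correct C2 guess of $(2^{\phi(r+d)} \cdot G_{k_0})$-smallness of $S = \{\tau : |\Gamma^\tau|\geq \phi(r+d)\}$ is never refuted, and $\req{i}$ holds iff $A$ never hits $S$ thereafter (otherwise $\Gamma^A$ stays below $\phi(r+d)$ in length, hence partial); (iii) a successful C3 action at (b.3.ii) locates a $G$-bushy subtree with leaves in $S_{\rho^*}$, from which the effective-search argument of the overview gives $K(\rho^*) \leq r$, so $K(\Gamma^A \uh \phi(r+d)) \leq r = (r+d)-d$, satisfying $\req{i}$ with $n=r+d$. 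Modes (i) and (iii) satisfy $\req{i}$ with conditional probability~$1$, so the entire analysis reduces to bounding the failure probability in mode (ii).

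The main step is the probability bound in mode (ii), and the novel ingredient compared with the proof of Theorem~\ref{thm:main-theorem} is the dynamic nature of $G$. The key observation is that once the correct C2 assumption enters $\mathcal{L}$ at stage $k_0$, it stays there for the rest of the construction (substeps (b.2) and (b.3) only remove the assumption they are currently refuting), so at every later stage $k$ the algorithm's function $G(k)=2^{k+m}\sum_j g_j(k)$ has $2^{\phi(r+d)}\cdot G_{k_0}$ as one of its summands, giving
\[
G(k)\;\geq\;2^{k+m}\cdot 2^{\phi(r+d)}\cdot G_{k_0}(k).
\]
This bound holds both when $\sigma(k)$ is drawn uniformly from $[0,G(k)]$ in step (b.1) and when some other requirement's C3 action temporarily restricts the walk to a $G$-bushy tree in step (b.3.ii), since the branching at level $k$ of any such tree is at least $G(k)$. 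Applying the calculation of Lemma~\ref{lem:small-vs-measure} to the $g$-small set $S$ (with $g = 2^{\phi(r+d)}\cdot G_{k_0}$) inside a tree of branching at least $G$, the probability of hitting $S$ at level $k$ is at most $g(k)/G(k)\leq 2^{-k-m}$, whence the total failure probability for $\req{i}$ is at most $\sum_{k\geq k_0} 2^{-k-m}\leq 2^{-k_0-m+1}$.

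To conclude, let $k_i$ denote the stage at which $\req{i}$ enters its stabilizing mode. Since at most one requirement receives attention per loop iteration, the $k_i$ are distinct naturals, and summing the per-requirement failure bounds yields total failure probability at most $\sum_{k\geq 0}2^{-k-m+1} = 2^{-m+2}$. The main obstacle I anticipate is purely bookkeeping: one must verify carefully that in substeps (b.2) and (b.3) only the refuted assumption is removed from $\mathcal{L}$, so that a correct C2 assumption for $\req{i}$ really is preserved at every later stage, and that the functions stored in $\mathcal{L}$ are uniformly computable (as needed for $G$ itself and for the $\Sigma_1$ search in C3). As in the proof of Theorem~\ref{thm:main-theorem}, one also adds the harmless side condition that a requirement is only allowed to receive attention once the stage is large enough for the Kolmogorov bound in mode (iii) to close; this merely postpones each requirement by finitely many stages and leaves the rest of the analysis intact.
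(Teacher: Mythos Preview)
Your proof is correct and follows essentially the same approach as the paper: the same three-case split, the same key observation that a correct C2 assumption remains in $\mathcal{L}$ forever so that every later $G$ dominates it by a factor $2^{k+m}$, and the same ``good stage'' summation to aggregate over requirements. The only cosmetic differences are that the paper absorbs the additive constants for the Kolmogorov bound in mode~(iii) by choosing $r$ large enough rather than by delaying requirements, and that your phrase ``$\req{i}$ holds iff $A$ never hits $S$'' should read ``if'' (hitting $S$ does not preclude satisfaction, but avoiding $S$ guarantees it).
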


\begin{proof}
Fix a requirement~$\req{i}=\req{\Gamma,\phi,d}$. This requirement receives attention infinitely often until satisfied. This means that if the algorithm does not get stuck, one of the following happens
\begin{itemize}
\item At some point it makes for $\req{i}$ a correct assumption that $\phi$ is undefined on some value during substep (a) or (b.3.i). 
\item At some point it makes for $\req{i}$ a correct assumption that $S = \{\tau \in 2^{<\omega} \mid |\Gamma^\tau|\geq \phi(r+d)\}$ is $(2^{\phi(r+d)} \cdot g)$-small above~$\sigma$ for some computable~$g$. 
\item the $i$-th requirement causes the algorithm to enter some substep (b.3.ii) but a tree~$T$ is found thereafter. 
\end{itemize}

In the first case, the requirement is satisfied vacuously. In the second case, we have a set $S = \{\tau \in 2^{<\omega} \mid |\Gamma^\tau|\geq \phi(r+d)\}$ which is correctly assumed to be $(2^{\phi(r+d)} \cdot g)$-small above~$\sigma$ for some computable~$g$. For any $i \geq k$, the value of $\sigma(i)$ is chosen at later stages of the algorithm at random among at least $G(i)$-many values, where~$G$ is chosen to be greater than $2^{i+m} \cdot h$ for any function~$h$ appearing in the list~$\mathcal{L}$, which in particular includes~$g$ (since the assumption featuring~$g$ is correct, it is never removed from the list). By Lemma~\ref{lem:small-vs-measure}, it follows that the probability of hitting a node of~$S$ at some later stage is at most
\[
1 - \prod_{i \geq k} \left( 1 - \frac{g(i)}{2^{i+m}g(i)} \right) = 1 - \prod_{i \geq k} \left( 1 - \frac{1}{2^{i+m}} \right) \leq \sum_{i \geq k} 2^{-i-m} \leq 2^{-k-m+1}
\]

In third case, we find a string~$\rho^*$ of length~$\phi(r+d)$ and $G$-bushy finite tree~$T$ whose leaves are contained in $S_{\rho^*} = \{\tau \in S \mid \Gamma^\tau \ext \rho^*\}$. The string~$\rho^*$ can be effectively found knowing $\Gamma, \phi, d$, the current value of $\sigma$ and $G$. But $G$ can be computed knowing the list~$\mathcal{L}$, and the integer~$r$ is precisely defined to bound the complexity of the tuple~$(\sigma,\Gamma,\phi,d,\mathcal{L})$, thus
\[
K(\rho^*) \leq r
\]
But $|\rho^*| = \phi(r+d)$, so the requirement $\req{\Gamma,\phi,d}$ is satisfied for $n=r+d$. Note that to be completely rigorous, when we say that `$r$ bounds the Kolmogorov complexity of~$(\sigma,\Gamma,\phi,d,\mathcal{L})$', we also need~$r$ to be large enough to overcome the additive constants that will arise in the calculation of the bound of $K(\rho^*)$. This is done, as usual, by picking~$r$ `large enough' (which can be achieved computably), or using a fixed-point argument.

Now, given a requirement~$\req{\Gamma,\phi,d}$, we say that \emph{stage~$k$ is good for~$\req{\Gamma,d}$} when after having built~$\sigma \uh k$, the next iteration of the loop, $\req{\Gamma,\phi,d}$ receives attention and a true assumption is made or stage (b.3.ii) is reached and a tree~$T$ is found. To every requirement corresponds exactly one good stage (and no two requirements have a good stage in common). As before, if $k$ is the good stage for a requirement, the probability that the requirement is satisfied, conditional to the algorithm returning an infinite sequence, is at least $1-2^{-k-m+1}$ and over all requirements, this gives a probability of at least $1-\sum_k 2^{-k-m+1}=1-2^{-m+2}$. 
\end{proof}

\begin{claim}
The probability that we hit a node of $B_{\DNC}$  is at most $2^{-m+1}$. 
\end{claim}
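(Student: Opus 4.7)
The plan is to replay, in the dynamic setting of this second construction, the short $B_{\DNC}$-avoidance calculation from the first proof (Claim~3 of the previous construction). At each level $k$, the algorithm specifies the next coordinate $\sigma(k)$ by one of two mechanisms: either a single uniform draw from $\{0,\ldots,G(k)-1\}$ in substep (b.1) (with $G$ depending on the list $\mathcal{L}$ in force at that moment), or a step of a random walk restricted to a $G$-bushy subtree while we are inside substep (b.3.ii). In both cases the support of $\sigma(k)$ has size at least $G(k)$ for whichever version of $G$ is being used.

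The first step I would carry out is to verify the uniform lower bound $G(u) \geq 2^{u+m}$ for every version of $G$ that can appear during the run. This is essentially immediate from the formula $G(u) = 2^{u+m}(g_0(u)+\ldots+g_l(u))$, since every $g_i$ is positive and, in the edge case when $\mathcal{L}$ contains no smallness assumption, the sum is naturally interpreted as~$1$ (otherwise the algorithm would be ill-defined). Once this is in hand, the conditional probability that $\sigma(k)$ takes the unique possibly bad value $\phi^X_k(k)$, given the previous choices, is at most $2^{-k-m}$, whether the algorithm happens to be executing (b.1) or making a step inside a restricted walk.

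A union bound over $k \geq 0$ then yields
\[
\Prob{\sigma \text{ ever enters } B_{\DNC}} \;\leq\; \sum_{k \geq 0} 2^{-k-m} \;=\; 2^{-m+1},
\]
which is the required bound. The only step that really needs care---though it is hardly an obstacle---is sanity-checking that the inequality $G(k) \geq 2^{k+m}$ holds at every stage even though $G$ changes dynamically as assumptions enter and leave $\mathcal{L}$; the $2^{u+m}$ factor baked into the definition of $G$ gives this for free, regardless of how $\mathcal{L}$ fluctuates.
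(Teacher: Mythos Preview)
Your proposal is correct and follows essentially the same route as the paper: at each level the coordinate is drawn from at least $2^{k+m}$ values, so the probability of matching $\phi^X_k(k)$ is at most $2^{-k-m}$, and a union bound gives $2^{-m+1}$. You are simply more explicit than the paper in checking $G(k)\geq 2^{k+m}$ in both the (b.1) and (b.3.ii) cases and in flagging the degenerate situation where $\mathcal{L}$ contains no smallness assumption; the paper's one-line proof asserts the lower bound without spelling this out.
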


\begin{proof}
For every~$n$ the value of~$\sigma(n)$ is chosen at random among at least $2^{n+m}$ values, so the probability of picking $\phi_n^X(n)$, if defined, is at most $2^{-n-m}$, thus a total bound of $\sum_n 2^{-n-m} = 2^{-m+1}$. 
\end{proof}

Thus the probability of success of the algorithm is at least $1-2^{-m+3}$. To get Theorem~\ref{thm:dnc-vs-complex}, it remains to evaluate the level of algorithmic randomness needed, but the situation is essentially the same as in previous proofs. Checking whether the algorithm gets stuck at a given stage can be tested effectively in~$\emptyset'$ (with $\emptyset'$ we can check whether partial functions are defined or not, and we can check bigness/smalllness of c.e.\ sets of strings) and whether it hits $B_{\DNC}$ can be tested using~$X$ as discussed before. \\


\section{Conclusion}

The above results provide a clear picture of the hierarchy of DNC notions in V'Yugin's algebra. Namely, for every computable function $h_0$ which grows fast enough, and computable $h_1$ which grows fast enough compared to~$h_0$, we have the strict inclusions:

\[
\mlr \sqsubset \dnc_{h_0} \sqsubset \dnc_{h_1} \sqsubset \complex \sqsubset \dnc
\]

Moreover, every 2-random real computes a witness for each of the four separations. While this is an interesting result in and of itself, the techniques we employed to prove these results are without doubt the main contribution of this paper. They illustrate the power and flexibility of fireworks arguments and show that they interact well with complex forcing notions. What is also interesting is that `true randomness' seems to be needed for our fireworks arguments. Other known fireworks arguments, such as  the proof that every 2-random real computes a hyperimmune set or that every 2-random computes a 1-generic real require only a very weak form of randomness. Indeed, for both of these constructions, Barmpalias et al.~\cite{BarmpaliasDL2013} showed that it suffices to have a non-computable real which is \emph{Turing-below} a 2-random, despite the fact that a real which is simply below a 2-random may have very little randomness content. Having a non-computable real which is below a 2-random real is not sufficient in our case: Indeed no 1-generic real computes a DNC function (see for example~\cite{DowneyH2010}), and as we just said, there are 1-generic reals which are below a 2-random real. \\

We believe that fireworks arguments will yield more applications in the future. In the constructions of this paper, the strategies for different requirements do interact, but there is no injury per se. It would be very interesting to find examples of situations where fireworks arguments can be mixed with finite/infinite injury constructions. 

\vspace*{1cm}

\noindent \textbf{Acknowledgements}. We are thankful to Peter G\'acs and Alexander Shen for enlightening us on fireworks arguments, to Mushfeq Khan for having made early versions of his survey available to us, and to Chris Porter and Rupert H\"olzl for having introduced us to the Levin-V'Yugin algebra and asked the question which ultimately led to this paper. We would also to thank two anonymous referees for their many useful comments and suggestions. Both authors acknowledge the support of the John Templeton Foundation through the grant
``Structure and Randomness in the Theory of Computation."

\bibliographystyle{plain}
\bibliography{doc}

\end{document}